\newcommand\shorttitle{Infinite p-adic random matrices and ergodic decomposition of p-adic Hua measures}
\newcommand\authors{Theodoros Assiotis}
\ifodd\value{page}
\authors
\shorttitle
\newtheorem{thm}{Theorem}[section]
\newtheorem{lem}[thm]{Lemma}
\newtheorem{defn}[thm]{Definition}
\newtheorem{rmk}[thm]{Remark}
\newtheorem{prop}[thm]{Proposition}
\title{\large \bf INFINITE P-ADIC RANDOM MATRICES AND ERGODIC DECOMPOSITION OF P-ADIC HUA MEASURES}
\author{\small THEODOROS ASSIOTIS}
\date{}
\begin{document}

\maketitle

\begin{abstract}
Neretin in \cite{NeretinHuaMeasures} constructed an analogue of the Hua measures on the infinite $p$-adic matrices $\textnormal{Mat}\left(\mathbb{N},\mathbb{Q}_p\right)$. Bufetov and Qiu in \cite{BufetovQiuClassificaiton} classified the ergodic measures on $\textnormal{Mat}\left(\mathbb{N},\mathbb{Q}_p\right)$ that are invariant under the natural action of $\textnormal{GL}(\infty,\mathbb{Z}_p)\times \textnormal{GL}(\infty,\mathbb{Z}_p)$. In this paper we solve the problem of ergodic decomposition for the $p$-adic Hua measures introduced by Neretin. We prove that the probability measure governing the ergodic decomposition has an explicit expression which identifies it with a Hall-Littlewood measure on partitions. Our arguments involve certain Markov chains.
\end{abstract}

\tableofcontents

\section{Introduction}

\subsection{Setting and history of the problem}
In the last few decades, especially after the introduction of Vershik's ergodic method \cite{VershikErgodic} and the seminal works of Vershik and Kerov on the indecomposable characters of the infinite dimensional unitary \cite{VershikKerovUnitary} and infinite symmetric \cite{VershikKerovSymmetric} groups, there has been considerable interest both in classifying indecomposable characters of such inductive limit groups and also in the allied problem of classification of ergodic measures invariant under the action of certain infinite dimensional groups, see for example \cite{OlshanskiVershik}, \cite{Pickrell}, \cite{Defosseux}, \cite{GKV}, \cite{BufetovQiuClassificaiton}, \cite{AssiotisNajnudel} for more details. Once the task of classification is completed, a natural direction of research, also known as the problem of harmonic analysis, see \cite{OlshanskiHarmonic}, is to describe explicitly how a distinguished reducible character, or respectively an invariant measure, decomposes into extremal objects; namely either indecomposable characters or ergodic measures, see for example \cite{BorodinOlshanskiErgodic}, \cite{BorodinOlshanskiHarmonic}, \cite{GorinOlshanski}, \cite{BufetovI}, \cite{BufetovII}, \cite{BufetovIII}, \cite{Qiu}, \cite{BufetovQiu}, \cite{Cuenca}, \cite{AssiotisInverseWishart}.

The purpose of this article is to solve such a problem in a setting that had not been considered previously. More precisely, we are interested in probability measures on the space of infinite $p$-adic matrices $\textnormal{Mat}\left(\mathbb{N},\mathbb{Q}_p\right)$ invariant under the natural action of the group $\textnormal{GL}(\infty,\mathbb{Z}_p)\times \textnormal{GL}(\infty,\mathbb{Z}_p)$\footnote{All these notions will be defined properly in the sequel.}. The ergodic measures for this action were classified recently by Bufetov and Qiu in \cite{BufetovQiuClassificaiton}. A few years earlier, in \cite{NeretinHuaMeasures}, Neretin had already constructed on $\textnormal{Mat}\left(\mathbb{N},\mathbb{Q}_p\right)$ a $p$-adic analogue of the celebrated Hua measures, see \cite{Hua}, \cite{Pickrellmeasure}, \cite{Neretin}. The main result of this paper is an explicit description of how the $p$-adic Hua measures of Neretin \cite{NeretinHuaMeasures} decompose into the ergodic measures of Bufetov and Qiu from \cite{BufetovQiuClassificaiton}. Corresponding problems over the complex numbers (instead of the $p$-adics) were solved by Borodin, Olshanski, Bufetov and Qiu, see \cite{BorodinOlshanskiErgodic}, \cite{BorodinOlshanskiHarmonic}, \cite{BufetovI}, \cite{BufetovII}, \cite{BufetovIII}, \cite{Qiu}, \cite{BufetovQiu} for more details.

The scheme of proof of our main result follows an adaptation of the Vershik-Kerov ergodic method \cite{VershikErgodic} of approximation from finite dimensions. This of course does not come as a surprise as this strategy is in fact shared by all the papers mentioned above. Essentially, the actual problem that we solve and thus the main contribution of this paper is taking the large $N$-limit of the analogue of the singular values (we will be more precise later) of certain $N\times N$ $p$-adic random matrices. This is where our methods differ from the ones used in \cite{BorodinOlshanskiErgodic}, \cite{BorodinOlshanskiHarmonic}, \cite{GorinOlshanski}, \cite{BufetovI}, \cite{BufetovII}, \cite{BufetovIII}, \cite{Qiu}, \cite{BufetovQiu}, \cite{Cuenca}, \cite{AssiotisInverseWishart}.

In all previous works just mentioned\footnote{There are of course cases of problems of harmonic analysis which do not involve determinantal point processes and orthogonal polynomials, see for example \cite{KOV}. We note that the methods used there are also different from ours.} \cite{BorodinOlshanskiErgodic}, \cite{BorodinOlshanskiHarmonic}, \cite{GorinOlshanski}, \cite{BufetovI}, \cite{BufetovII}, \cite{BufetovIII}, \cite{Qiu}, \cite{BufetovQiu}, \cite{Cuenca}, \cite{AssiotisInverseWishart} one uses the technology of determinantal point processes \cite{Soshnikov} to take this large $N$ limit. The task then boils down to establishing asymptotics and estimates for certain families of orthogonal polynomials, which can be technically rather challenging. The technique of determinantal point processes does not seem applicable in our setting and we need to take a different approach.

Instead, we find some discrete time Markov chain structure underlying the law of the singular values. This is inspired by the work of Evans \cite{Evans} who studied the volume measure on $\textnormal{Mat}\left(N,\mathbb{Z}_p\right)$ (which turns out to give rise to an ergodic measure, see Section \ref{ProofSection} for more details) and an extensive series of works \cite{FulmanConjugacyClasses}, \cite{FulmanRMT}, \cite{FulmanRogersRamanujan1}, \cite{FulmanRogersRamanujan2}, \cite{FulmanCohenLenstra} by Fulman in the different setting of random matrices over a finite field. These Markov chains appear in a number of different settings, in particular they are closely related to the Rogers-Ramanujan identities and the Cohen-Lenstra heuristics from number theory, see for example \cite{FulmanRogersRamanujan1}, \cite{FulmanRogersRamanujan2}, \cite{FulmanCohenLenstra}.

A novel feature of the present paper compared to the simpler setting studied in \cite{Evans} is the fact that in order to sample the law of the singular values one needs to run two discrete time Markov chains with coupled initial conditions, or equivalently (and possibly more intuitively) sample an initial condition and run two Markov chains one moving forwards and the other one backwards in discrete time. 

Once we have this Markov chain representation, the task of taking the $N\to \infty$ limit becomes, from a technical standpoint, rather straightforward. In the limit we obtain a Markov chain, that also appears in the work of Fulman \cite{FulmanConjugacyClasses}, \cite{FulmanRMT}, \cite{FulmanRogersRamanujan1} in the context of random matrix theory over a finite field, which allows one to sample according to a Hall-Littlewood measure on partitions \cite{FulmanConjugacyClasses}, \cite{MacdonaldProcesses}. Finally, we believe that a variation of this approach could possibly be used for other $p$-adic random matrices, in particular in the more complicated case of the $p$-adic Hua measures on symmetric matrices $\textnormal{Sym}\left(\mathbb{N},\mathbb{Q}_p\right)$ and we say more about this in Section \ref{FurtherDirectionsSection}.

In the rest of the introduction we give some background and state the main result of this paper, Theorem \ref{MainResult} below, precisely.
\subsection{Preliminaries}
In order to make this paper self-contained we begin with a number of rather standard preliminaries. The reader is referred to \cite{BufetovQiuClassificaiton} and \cite{NeretinHuaMeasures} and the references therein for more details.

Let $p$ be a prime. Any rational $r\in \mathbb{Q}\backslash\{0\}$ can be written as $r=p^u\frac{a}{b}$ where $a,b$ are not divisible by $p$. Set $|r|=p^{-u}$ and $|0|=0$. Then, the map $(x,y)\mapsto |x-y|$ defines a metric on $\mathbb{Q}$ and we denote the completion of $\mathbb{Q}$ in this metric by $\mathbb{Q}_p$ which is the field of $p$-adic numbers. The closed unit ball around $0$, $\mathbb{Z}_p=\{x\in \mathbb{Q}_p: |x|\le 1 \}$ is called the ring of $p$-adic integers.

Let $\textnormal{GL}(N,\mathbb{Q}_p)$ and $\textnormal{GL}(N,\mathbb{Z}_p)$ be the groups of invertible $N\times N$ matrices over $\mathbb{Q}_p$ and $\mathbb{Z}_p$ respectively. Then, we define the inductive limit group:
\begin{align*}
\textnormal{GL}(\infty,\mathbb{Z}_p)=\underset{\rightarrow}{\lim}\textnormal{GL}(N,\mathbb{Z}_p)
\end{align*}
where the limit is taken with respect to the maps:
\begin{align*}
\textnormal{GL}(N,\mathbb{Z}_p)\ni\mathbf{B}\mapsto \begin{bmatrix}
\mathbf{B} &0\\
 0 & 1
\end{bmatrix} \in \textnormal{GL}(N+1,\mathbb{Z}_p).
\end{align*}
Equivalently, $\textnormal{GL}(\infty,\mathbb{Z}_p)$ is the group of infinite invertible matrices $\mathbf{B}=\{\mathbf{B}_{ij} \}_{i,j=1}^\infty$ over $\mathbb{Z}_p$ so that $\mathbf{B}_{ij}=\mathbf{1}(i=j)$ if $i+j$ is large enough. Moreover, we define the space of infinite matrices over $\mathbb{Q}_p$:
\begin{align*}
\textnormal{Mat}\left(\mathbb{N},\mathbb{Q}_p\right)=\big\{\mathbf{Z}=\{\mathbf{Z}_{ij}\}_{i,j=1}^{\infty}: \mathbf{Z}_{ij}\in \mathbb{Q}_p \big\}.
\end{align*}
In this paper we are interested in the following group action of $\textnormal{GL}(\infty,\mathbb{Z}_p)\times \textnormal{GL}(\infty,\mathbb{Z}_p)$ on $\textnormal{Mat}\left(\mathbb{N},\mathbb{Q}_p\right)$:
\begin{align*}
\left(\left(\mathbf{B}_1,\mathbf{B}_2\right),\mathbf{Z}\right)\mapsto\mathbf{B}_1\mathbf{Z}\mathbf{B}_2^{-1}, \ \ \mathbf{B}_1,\mathbf{B}_2 \in \textnormal{GL}(\infty,\mathbb{Z}_p),\  \mathbf{Z}\in \textnormal{Mat}\left(\mathbb{N},\mathbb{Q}_p\right).
\end{align*}
Let $\mathcal{P}\left(\textnormal{Mat}\left(\mathbb{N},\mathbb{Q}_p\right)\right)$ be the space of probability measures on $\textnormal{Mat}\left(\mathbb{N},\mathbb{Q}_p\right)$ endowed with the weak topology. We let $\mathcal{P}_{\textnormal{inv}}\left(\textnormal{Mat}\left(\mathbb{N},\mathbb{Q}_p\right)\right)$ be the convex subset of probability measures invariant under the action of $\textnormal{GL}(\infty,\mathbb{Z}_p)\times \textnormal{GL}(\infty,\mathbb{Z}_p)$ defined above. Moreover, we denote by $\mathcal{P}_{\textnormal{erg}}\left(\textnormal{Mat}\left(\mathbb{N},\mathbb{Q}_p\right)\right)$ the subset of ergodic measures, namely $\mathcal{M}\in\mathcal{P}_{\textnormal{erg}}\left(\textnormal{Mat}\left(\mathbb{N},\mathbb{Q}_p\right)\right)$ if $\mathcal{M}\in\mathcal{P}_{\textnormal{inv}}\left(\textnormal{Mat}\left(\mathbb{N},\mathbb{Q}_p\right)\right)$ and for any $\textnormal{GL}(\infty,\mathbb{Z}_p)\times \textnormal{GL}(\infty,\mathbb{Z}_p)$-invariant Borel subset $\mathcal{A}\subset\textnormal{Mat}\left(\mathbb{N},\mathbb{Q}_p\right)$, either $\mathcal{M}\left(\mathcal{A}\right)=0$ or $\mathcal{M}\left(\textnormal{Mat}\left(\mathbb{N},\mathbb{Q}_p\right)\backslash\mathcal{A}\right)=0$. Again, $\mathcal{P}_{\textnormal{erg}}\left(\textnormal{Mat}\left(\mathbb{N},\mathbb{Q}_p\right)\right)$ is endowed with the induced weak topology. Finally, we note that by a general result of Bufetov, see Proposition 2 in \cite{Bufetov}, the notion of ergodicity coincides with the notion of indecomposability, namely $\mathcal{M}\in\mathcal{P}_{\textnormal{inv}}\left(\textnormal{Mat}\left(\mathbb{N},\mathbb{Q}_p\right)\right)$ is indecomposable if the equality $\mathcal{M}=c \mathcal{M}_1+(1-c)\mathcal{M}_2$, with $c \in (0,1)$, $\mathcal{M}_1, \mathcal{M}_2 \in\mathcal{P}_{\textnormal{inv}}\left(\textnormal{Mat}\left(\mathbb{N},\mathbb{Q}_p\right)\right)$ implies $\mathcal{M}=\mathcal{M}_1=\mathcal{M}_2$.

For any $m\in \mathbb{N}$ we denote by $d\mathsf{vol}=d\mathsf{vol}_m$ the Haar measure on $\mathbb{Q}_p^m$ normalized by $\mathsf{vol}\left(\mathbb{Z}_p^m\right)=1$. Using the identification $\textnormal{Mat}\left(N,\mathbb{Q}_p\right)\simeq \mathbb{Q}_p^{N^2}$ we can define $\mathsf{vol}$ on $\textnormal{Mat}\left(N,\mathbb{Q}_p\right)$ with the normalization $\mathsf{vol}\left(\textnormal{Mat}\left(N,\mathbb{Z}_p\right)\right)=1$. Note that, the measure $\mathsf{vol}$ on $\textnormal{Mat}\left(N,\mathbb{Q}_p\right)$ is invariant under the action of $\textnormal{GL}(N,\mathbb{Z}_p)\times \textnormal{GL}(N,\mathbb{Z}_p)$ on $\textnormal{Mat}\left(N,\mathbb{Q}_p\right)$ considered above. Finally, the Haar measure $d\mathsf{Haar}$ on $\textnormal{GL}(N,\mathbb{Q}_p)$, normalized so that it is a probability measure on the compact group $\textnormal{GL}(N,\mathbb{Z}_p)$, is given by:
\begin{align}
d\mathsf{Haar}\left(\mathbf{Z}\right)=\frac{1}{\left(p^{-1};p^{-1}\right)_N}|\det\left(\mathbf{Z}\right)|^{-N} d\mathsf{vol}\left(\mathbf{Z}\right),
\end{align}
where here and throughout the paper, $\left(\alpha;q\right)_N$ denotes the $q$-Pochhammer symbol:
\begin{align*}
\left(\alpha;q\right)_N=\prod_{j=0}^{N-1}\left(1-\alpha q^j\right), \ \left(\alpha;q\right)_0=1.
\end{align*}

\subsection{Classification of ergodic measures}

The goal of this section is to recall the classification of ergodic measures $\mathcal{P}_{\textnormal{erg}}\left(\textnormal{Mat}\left(\mathbb{N},\mathbb{Q}_p\right)\right)$ due to Bufetov and Qiu \cite{BufetovQiuClassificaiton}. We begin with some notation and definitions. We consider the following space:
\begin{align}
\Delta=\big\{\mathbf{k}=\left(k_j\right)_{j=1}^{\infty}\in \left(\mathbb{Z}\cup \{-\infty \}\right)^{\mathbb{N}}:k_1\ge k_2\ge k_3\ge \cdots \big\}.
\end{align}
We endow $\Delta$ with the induced topology of Tychonoff's product topology on $\left(\mathbb{Z}\cup \{-\infty \}\right)^{\mathbb{N}}$.
\begin{defn}\label{DefinitionErgodicMeasure}
We let
\begin{align*}
X_i^{(N)}, \ Y_i^{(N)}, \ Z_{ij}, \ i,j,N=1,2,3\dots
\end{align*}
be independent random variables identically distributed according to the Haar probability measure on $\mathbb{Z}_p$, namely, $d\mathsf{vol}_1$. Then, given $\mathbf{k}\in \Delta$ we let:
\begin{align*}
\mu_{\mathbf{k}}=\mathsf{Law}\left(\mathbf{A}_{\mathbf{k}}\right)
\end{align*}
to be the law of the infinite random matrix $\mathbf{A}_{\mathbf{k}} \in \textnormal{Mat}\left(\mathbb{N},\mathbb{Q}_p\right)$ defined as follows\footnote{We note that the series in (\ref{DefInfiniteMatrix}) defining each entry $(i,j)$ of $\mathbf{A}_{\mathbf{k}}$ converges almost surely, which can be seen as follows. First, observe that if $k=\lim_{j\to \infty}k_j \neq -\infty$ then the series has a finite number of terms. On the other hand, if $k=-\infty$ (in which case the series can have a finite or infinite number of terms) then by the ultrametric property $|x+y|\le \max \{|x|,|y|\}$ it is easily seen (recall that $|p^{-k_N}|=p^{k_N}$) that the partial sums of the series in (\ref{DefInfiniteMatrix}) almost surely form a Cauchy sequence.}:
\begin{align}\label{DefInfiniteMatrix}
\mathbf{A}_{\mathbf{k}}=\left[\sum_{N:k_N>k}^{}p^{-k_N}X_i^{(N)}Y_j^{(N)}+p^{-k}Z_{ij}\right]_{i,j\in \mathbb{N}},
\end{align}
where $k=\lim_{j\to \infty}k_j\in \mathbb{Z}\cup \{-\infty \}$.
\end{defn}

We are now in a position to state the classification theorem of Bufetov and Qiu in \cite{BufetovQiuClassificaiton}.

\begin{thm}\label{ClassificationTheorem}
The map $\mathbf{k}\mapsto \mu_{\mathbf{k}}$ is a homeomorphism between $\Delta$ and $\mathcal{P}_{\textnormal{erg}}\left(\textnormal{Mat}\left(\mathbb{N},\mathbb{Q}_p\right)\right)$.
\end{thm}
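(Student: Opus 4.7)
The plan is to follow Vershik's ergodic method, which reduces the classification of ergodic measures to an asymptotic analysis of finite-dimensional orbits. The starting point is a $p$-adic Smith-normal-form statement: every $\mathbf{Z}\in\textnormal{Mat}(N,\mathbb{Q}_p)$ is $\textnormal{GL}(N,\mathbb{Z}_p)^2$-equivalent to a unique diagonal matrix $\textnormal{diag}(p^{-k_1^{(N)}},\dots,p^{-k_N^{(N)}})$ with $k_1^{(N)}\ge\cdots\ge k_N^{(N)}$ in $\mathbb{Z}\cup\{-\infty\}$ (with the convention $p^{-(-\infty)}=0$). Consequently any $\textnormal{GL}(N,\mathbb{Z}_p)^2$-invariant probability measure on $\textnormal{Mat}(N,\mathbb{Q}_p)$ is determined by the pushforward of the singular-value map $\mathbf{Z}\mapsto \mathbf{k}^{(N)}(\mathbf{Z})$, and the task becomes the study of the sequence of such pushforwards as $N\to\infty$.

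Next, I would show that each candidate $\mu_{\mathbf{k}}$ lies in $\mathcal{P}_{\textnormal{erg}}(\textnormal{Mat}(\mathbb{N},\mathbb{Q}_p))$. Invariance is immediate from (\ref{DefInfiniteMatrix}), since Haar measure on $\mathbb{Z}_p^\infty$ is left-invariant under $\textnormal{GL}(\infty,\mathbb{Z}_p)$ and the series structure is preserved by multiplication on either side by such a matrix. Ergodicity follows because the singular values of the $N\times N$ corner converge almost surely (as $N\to\infty$) to the deterministic sequence $\mathbf{k}$, so the tail $\sigma$-algebra sees only constants; alternatively one invokes a standard zero-one law coming from the independence of the $X_i^{(N)}$, $Y_j^{(N)}$, $Z_{ij}$.

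The core step is to show that every ergodic $\mathcal{M}$ is of the form $\mu_{\mathbf{k}}$. Given such $\mathcal{M}$, I would establish a $p$-adic interlacing/monotonicity property linking $\mathbf{k}^{(N)}(\mathbf{Z})$ to $\mathbf{k}^{(N+1)}(\mathbf{Z})$ for the upper-left corners of an $\mathcal{M}$-distributed $\mathbf{Z}$; this should yield that each $k_j^{(N)}(\mathbf{Z})$ has a limit $k_j(\mathbf{Z})\in\mathbb{Z}\cup\{-\infty\}$. Ergodicity then forces the whole limiting sequence $\mathbf{k}(\mathbf{Z})$ to be almost surely equal to a fixed $\mathbf{k}\in\Delta$ (this is the Vershik-Kerov step: extreme points of the invariant simplex correspond to measures whose asymptotic invariants are deterministic). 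To conclude $\mathcal{M}=\mu_{\mathbf{k}}$, I would use an approximation lemma showing that the orbital (uniform) measure on the $\textnormal{GL}(N,\mathbb{Z}_p)^2$-orbit of $\textnormal{diag}(p^{-k_1^{(N)}},\dots,p^{-k_N^{(N)}})$, pushed up to $\textnormal{Mat}(\mathbb{N},\mathbb{Q}_p)$ by extending with independent Haar-$\mathbb{Z}_p$ entries, converges weakly to $\mu_{\mathbf{k}}$ whenever the finite signatures stabilize appropriately; applying this to the empirical singular values then pins down $\mathcal{M}$.

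Finally, for the topological part: continuity of $\mathbf{k}\mapsto \mu_{\mathbf{k}}$ is handled directly from (\ref{DefInfiniteMatrix}) using the ultrametric dominated-convergence style argument sketched in the footnote, which shows that convergence in $\Delta$ entails joint convergence in distribution of arbitrary finite sub-blocks; continuity of the inverse is established because the singular-value map is continuous and the limit $\mathbf{k}(\mathbf{Z})$ is a measurable function whose law under $\mu_{\mathbf{k}}$ is the point mass at $\mathbf{k}$. The main obstacle I anticipate is the combination of the $p$-adic interlacing inequality for singular values of nested corners and the approximation lemma mentioned above: both are where the argument genuinely uses the non-Archimedean metric structure, and they replace the orthogonal-polynomial/determinantal input that typical Archimedean analogues rely on.
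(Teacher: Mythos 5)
Theorem~\ref{ClassificationTheorem} is not proved in this paper; it is quoted from \cite{BufetovQiuClassificaiton} and is precisely the input the rest of the paper builds on, so there is no in-paper argument to compare you against. Your outline does track the strategy of that reference at a high level --- Vershik's ergodic method, orbital measures, monotonicity of singular numbers of nested corners --- and the appendix here touches exactly the lemmas you gesture at (Proposition~\ref{OrbitalMeasuresProp}, which cites Lemma~8.6, Theorem~8.8 and Proposition~5.1 of \cite{BufetovQiuClassificaiton}). But as a proof the proposal is incomplete precisely where the real content lives. The monotonicity of $k_j^{(N)}$ in $N$ is one-sided, so it gives existence of a limit in $\mathbb{Z}\cup\{\pm\infty\}$; you do not address why the limit cannot be $+\infty$, which does not follow from monotonicity alone and needs the probability-measure structure of $\mathcal{M}$. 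The ``approximation lemma'' on weak limits of orbital measures, together with its converse (that weak convergence of orbital measures forces stabilization of signatures), is the technical core and cannot be left as a black box; it is also what really underlies injectivity and bicontinuity of $\mathbf{k}\mapsto\mu_{\mathbf{k}}$, since ``the singular-value map is continuous'' does not let you transport weak convergence of the $\mu_{\mathbf{k}^{(n)}}$ into convergence of almost-sure limits of measurable functionals.

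There is also a soft but real gap in the ergodicity of $\mu_{\mathbf{k}}$. Almost-sure convergence of $\mathbf{k}^{(N)}(\mathbf{Z})$ to the deterministic $\mathbf{k}$ does not by itself say that ``the tail $\sigma$-algebra sees only constants,'' and even if it did, ergodicity concerns $\textnormal{GL}(\infty,\mathbb{Z}_p)\times\textnormal{GL}(\infty,\mathbb{Z}_p)$-invariant sets, not tail sets; passing from one to the other is itself an argument. The proposed zero--one law from independence of the $X_i^{(N)}, Y_j^{(N)}, Z_{ij}$ does not apply directly for the same reason: invariant sets in $\textnormal{Mat}\left(\mathbb{N},\mathbb{Q}_p\right)$ are not tail sets of that auxiliary i.i.d.\ family. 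Your roadmap matches the actual one, but these lemmas --- which occupy much of \cite{BufetovQiuClassificaiton} --- would need to be supplied before calling this a proof.
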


\subsection{$p$-adic Hua measures}
In order to introduce the $p$-adic Hua measures in Definition \ref{DefinitionPadicHua} below we will need the following well-known result on decomposing a matrix $\mathbf{Z} \in \textnormal{Mat}\left(N,\mathbb{Q}_p\right)$, which will also be important in the subsequent analysis, see for example \cite{BufetovQiuClassificaiton}, \cite{NeretinHuaMeasures} and the references therein.
\begin{lem}
Let $\mathbf{Z} \in \textnormal{Mat}\left(N,\mathbb{Q}_p\right)$. Then, $\mathbf{Z}$ can be written as 
\begin{align}\label{SingularDecomposition}
\mathbf{Z}=\mathbf{B}\cdot\textnormal{diag}\left(p^{-k_1},p^{-k_2},\dots,p^{-k_N}\right)\cdot \mathbf{C}, \ \textnormal{ with }\mathbf{B}, \mathbf{C}\in \textnormal{GL}\left(N,\mathbb{Z}_p\right)
\end{align}
where $k_1\ge k_2\ge k_3\ge \cdots \ge k_N\ge -\infty$ and $\textnormal{diag}\left(p^{-k_1},p^{-k_2},\dots,p^{-k_N}\right)$ denotes the diagonal matrix with diagonal elements $p^{-k_1},p^{-k_2},\dots,p^{-k_N}$. The $N$-tuple $\left(k_1,k_2,\dots,k_N\right)$ is uniquely determined by $\mathbf{Z}$ and we call these the singular numbers of $\mathbf{Z}$.
\end{lem}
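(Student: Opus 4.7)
The statement is the $p$-adic analogue of singular value decomposition, essentially Smith normal form over the discrete valuation ring $\mathbb{Z}_p$ with uniformizer $p$, extended to the fraction field $\mathbb{Q}_p$. I would prove it in two steps: existence and uniqueness.

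\emph{Existence.} The plan is to reduce to classical Smith normal form over $\mathbb{Z}_p$. Since $\mathbf{Z}$ has only finitely many entries, I can pick $M\in\mathbb{Z}_{\ge 0}$ large enough that $p^M\mathbf{Z} \in \textnormal{Mat}(N,\mathbb{Z}_p)$; it then suffices to decompose $p^M\mathbf{Z}$ as $\mathbf{B}\cdot\textnormal{diag}(p^{m_1},\dots,p^{m_N})\cdot\mathbf{C}$ with $m_i\in\mathbb{Z}_{\ge 0}\cup\{+\infty\}$ (interpreting $p^{+\infty}=0$), and set $k_i=M-m_i$. To produce this, I run the standard algorithm: locate an entry of $p^M\mathbf{Z}$ of maximal $p$-adic absolute value, move it to position $(1,1)$ by permutation matrices in $\textnormal{GL}(N,\mathbb{Z}_p)$, and then use elementary row and column operations to clear the rest of the first row and column. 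The crucial point is that the relevant quotients $\mathbf{Z}_{i1}/\mathbf{Z}_{11}$ and $\mathbf{Z}_{1j}/\mathbf{Z}_{11}$ lie in $\mathbb{Z}_p$ (by the choice of $(1,1)$-entry and the ultrametric structure), so the elementary transvections stay inside $\textnormal{GL}(N,\mathbb{Z}_p)$. One then inducts on $N$; if at some stage the remaining submatrix is identically zero, the inductive step is vacuous and all subsequent $k_j$'s are $-\infty$.

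\emph{Uniqueness.} For each $j \in \{1,\dots,N\}$, I would introduce the quantity
\[
\Phi_j(\mathbf{Z}) = \max\bigl\{|\det \mathbf{Z}_{IJ}|: I, J\subset\{1,\dots,N\},\ |I|=|J|=j\bigr\},
\]
where $\mathbf{Z}_{IJ}$ is the submatrix with row indices $I$ and column indices $J$. By the Cauchy--Binet formula, every $j\times j$ minor of $\mathbf{B}\mathbf{Z}\mathbf{C}$ is a $\mathbb{Z}_p$-linear combination of $j\times j$ minors of $\mathbf{Z}$, so the ultrametric inequality gives $\Phi_j(\mathbf{B}\mathbf{Z}\mathbf{C})\le \Phi_j(\mathbf{Z})$ for all $\mathbf{B},\mathbf{C}\in\textnormal{GL}(N,\mathbb{Z}_p)$; applying this bound also to $\mathbf{B}^{-1}(\mathbf{B}\mathbf{Z}\mathbf{C})\mathbf{C}^{-1}$ yields equality, so $\Phi_j$ is a $\textnormal{GL}(N,\mathbb{Z}_p)\times\textnormal{GL}(N,\mathbb{Z}_p)$-invariant. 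On the diagonal matrix in (\ref{SingularDecomposition}), the only nonzero $j\times j$ minors are principal, and the ordering $k_1\ge k_2\ge \cdots$ gives $\Phi_j = p^{k_1+\cdots+k_j}$ (with convention $p^{-\infty}=0$). Hence each partial sum $k_1+\cdots+k_j$ is determined by $\mathbf{Z}$, and successive differences identify the individual $k_j$.

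\emph{Main obstacle.} The individual steps are standard once the analogy with Smith normal form is in place, and neither the algorithm nor the Cauchy--Binet invariance is difficult. The only real care is bookkeeping around the $-\infty$ entries: these correspond exactly to those $j$ for which $\Phi_j(\mathbf{Z})=0$, and one must verify that the inductive algorithm terminates cleanly when the running submatrix becomes zero, and that the uniqueness argument still recovers the ``trailing'' $-\infty$'s from vanishing of the corresponding $\Phi_j$. This is essentially a conventions check rather than a substantive obstacle.
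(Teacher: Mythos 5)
Your proof is correct. The paper itself does not prove this lemma: it presents it as a well-known result and refers to \cite{BufetovQiuClassificaiton}, \cite{NeretinHuaMeasures} and references therein, so there is no in-text argument to compare against. What you have written is the standard derivation: for existence, scale by $p^M$ to land in $\textnormal{Mat}(N,\mathbb{Z}_p)$ and run the Smith-normal-form pivoting argument, the point being that pivoting on an entry of maximal $|\cdot|$ guarantees the elementary row/column multipliers lie in $\mathbb{Z}_p$ and (by the ultrametric inequality) the entries of the reduced block do not grow, which is what forces the resulting exponents to be monotone; for uniqueness, the quantities $\Phi_j$ are the absolute values of the Fitting invariants, and Cauchy--Binet plus the ultrametric inequality shows they are $\textnormal{GL}(N,\mathbb{Z}_p)\times\textnormal{GL}(N,\mathbb{Z}_p)$-invariants that read off $p^{k_1+\cdots+k_j}$ on the diagonal form. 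The only place that needs the care you flag is the passage from the $\Phi_j$ back to the individual $k_j$: one should say explicitly that $r=\max\{j:\Phi_j\ne 0\}$ is the rank, that $k_1,\dots,k_r$ are recovered from consecutive ratios $\Phi_j/\Phi_{j-1}$ for $j\le r$, and that $k_j=-\infty$ for $j>r$, rather than literally subtracting logarithms when both are $-\infty$. With that phrasing the argument is complete.
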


\begin{defn}\label{DefinitionPadicHua}
Consider the following function $\gamma(\cdot)$ on $\textnormal{Mat}\left(N,\mathbb{Q}_p\right)$, for $\mathbf{Z} \in \textnormal{Mat}\left(N,\mathbb{Q}_p\right)$ with decomposition as in (\ref{SingularDecomposition}), by the formula:
\begin{align*}
\gamma(\mathbf{Z})=\prod_{k_j>0}^{}p^{k_j},
\end{align*}
with the understanding that $\gamma(\mathbf{Z})\equiv 1$ if $k_j\le 0$ for all $j$. Let $s>-1$. We define the $p$-adic Hua probability measures $\mathsf{M}_N^{(s)}$ on $\textnormal{Mat}\left(N,\mathbb{Q}_p\right)$\footnote{Implicitly, by its very definition in \cite{NeretinHuaMeasures}, see the computation of the normalization constant in Section 2 therein, the probability measure $\mathsf{M}_N^{(s)}$ is actually supported on $\textnormal{GL}(N,\mathbb{Q}_p)$.} by the formula, see \cite{NeretinHuaMeasures}:
\begin{align}\label{HuaMeasuresDefinition}
d\mathsf{M}_N^{(s)}(\mathbf{Z})=\frac{\left(p^{-1-s};p^{-1}\right)^2_{N}}{\left(p^{-1-s};p^{-1}\right)_{2N}}\gamma(\mathbf{Z})^{-s-2N}d\mathsf{vol}\left(\mathbf{Z}\right).
\end{align}
\end{defn}
Observe that, for all $N\ge 1$, $\mathsf{M}_N^{(s)}$ is $\textnormal{GL}(N,\mathbb{Z}_p)\times \textnormal{GL}(N,\mathbb{Z}_p)$-invariant. For any $N\ge 1$, we define the corners maps $\Pi^{N+1}_N:\textnormal{Mat}\left(N+1,\mathbb{Q}_p\right) \longrightarrow \textnormal{Mat}\left(N,\mathbb{Q}_p\right)$ by:
\begin{align*}
\Pi_N^{N+1}\left(\{\mathbf{Z}_{ij}\}_{i,j=1}^{N+1} \right)= \{\mathbf{Z}_{ij} \}_{i,j=1}^N.
\end{align*}
Note that, equivalently we could have defined the space of infinite matrices $\textnormal{Mat}\left(\mathbb{N},\mathbb{Q}_p\right)$ as the projective limit $\underset{\leftarrow}{\lim}\textnormal{Mat}\left(N,\mathbb{Q}_p\right)$ under the maps $\Pi_N^{N+1}$. Finally, for any $N\ge 1$, we also define $\Pi^{\infty}_N:\textnormal{Mat}\left(\mathbb{N},\mathbb{Q}_p\right) \longrightarrow \textnormal{Mat}\left(N,\mathbb{Q}_p\right)$ by:
\begin{align*}
\Pi_N^{\infty}\left(\{\mathbf{Z}_{ij}\}_{i,j=1}^{\infty} \right)= \{\mathbf{Z}_{ij} \}_{i,j=1}^N.
\end{align*}

The $p$-adic Hua measures have the remarkable property that they are consistent under the corners maps, which is the content of the following result due to Neretin \cite{Neretin}:
\begin{prop}\label{ConstructionInfiniteMeasure}
Let $s>-1$. Then, the $p$-adic Hua measures are consistent with respect to the corners maps:
\begin{align*}
\left(\Pi_{N}^{N+1}\right)_*\mathsf{M}_{N+1}^{(s)}=\mathsf{M}_N^{(s)}, \ \forall N\ge 1.
\end{align*}
Thus, by Kolmogorov's theorem we obtain a unique $\textnormal{GL}(\infty,\mathbb{Z}_p)\times \textnormal{GL}(\infty,\mathbb{Z}_p)$-invariant probability measure $\mathsf{M}^{(s)}$ on $\textnormal{Mat}\left(\mathbb{N},\mathbb{Q}_p\right)$, namely $\mathsf{M}^{(s)}\in \mathcal{P}_{\textnormal{inv}}\left(\textnormal{Mat}\left(\mathbb{N},\mathbb{Q}_p\right)\right)$, so that:
\begin{align*}
\left(\Pi_{N}^{\infty}\right)_*\mathsf{M}^{(s)}=\mathsf{M}_N^{(s)}, \ \forall N\ge 1.
\end{align*}
\end{prop}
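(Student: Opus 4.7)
The plan is to verify consistency by explicit integration over the extra row and column. Writing $\mathbf{Z}\in\mathrm{Mat}(N+1,\mathbb{Q}_p)$ in block form
$$\mathbf{Z}=\begin{pmatrix}\mathbf{A} & b\\ c^T & d\end{pmatrix},\qquad \mathbf{A}\in\mathrm{Mat}(N,\mathbb{Q}_p),\ b,c\in\mathbb{Q}_p^N,\ d\in\mathbb{Q}_p,$$
the Hua density (\ref{HuaMeasuresDefinition}) together with Fubini reduces the desired identity $(\Pi_N^{N+1})_*\mathsf{M}_{N+1}^{(s)}=\mathsf{M}_N^{(s)}$ to the pointwise (in $\mathbf{A}$) integral identity
$$\int_{\mathbb{Q}_p^{2N+1}}\gamma(\mathbf{Z})^{-s-2(N+1)}\,d\mathsf{vol}(b)\,d\mathsf{vol}(c)\,d\mathsf{vol}(d)=\frac{(1-p^{-1-s-2N})(1-p^{-2-s-2N})}{(1-p^{-1-s-N})^2}\,\gamma(\mathbf{A})^{-s-2N},$$
whose right-hand ratio is $C_N/C_{N+1}$ for $C_N=(p^{-1-s};p^{-1})_N^2/(p^{-1-s};p^{-1})_{2N}$. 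Once this is established, Kolmogorov's extension theorem produces the infinite measure $\mathsf{M}^{(s)}$, and its $\mathrm{GL}(\infty,\mathbb{Z}_p)^2$-invariance is inherited from the $\mathrm{GL}(N,\mathbb{Z}_p)^2$-invariance of each $\mathsf{M}_N^{(s)}$.

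The first step is to reduce to diagonal $\mathbf{A}$: the block transformation $\mathbf{Z}\mapsto\tilde{\mathbf{B}}\mathbf{Z}\tilde{\mathbf{C}}^{-1}$ with $\tilde{\mathbf{B}}=\mathrm{diag}(\mathbf{B},1)$ and $\tilde{\mathbf{C}}=\mathrm{diag}(\mathbf{C},1)$ for $\mathbf{B},\mathbf{C}\in\mathrm{GL}(N,\mathbb{Z}_p)$ preserves both $\mathsf{M}_{N+1}^{(s)}$ and $d\mathsf{vol}(b)\,d\mathsf{vol}(c)\,d\mathsf{vol}(d)$, so by (\ref{SingularDecomposition}) we may assume $\mathbf{A}=\mathrm{diag}(p^{-k_1},\ldots,p^{-k_N})$. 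The second and main step is to compute $\gamma(\mathbf{Z})$ in the diagonal case: iterated $p$-adic row and column operations (multiplication by elementary matrices in $\mathrm{GL}(N+1,\mathbb{Z}_p)$) bring $\mathbf{Z}$ to Smith normal form, expressing the new singular numbers $(\ell_1\geq\cdots\geq\ell_{N+1})$ through a piecewise formula in the $k_i$'s and the $p$-adic valuations $v(b_i),v(c_i),v(d)$, which is the $p$-adic analog of Cauchy interlacing for singular values. Partitioning $\mathbb{Q}_p^{2N+1}$ according to those valuations, the integrand $\gamma(\mathbf{Z})^{-s-2(N+1)}$ is locally constant on each cell, so the integral becomes a sum of geometric series in $p^{-1-s-N}$ and $p^{-1-s-2N}$ which, by standard $q$-Pochhammer telescoping, collapses to the claimed ratio.

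The principal obstacle is the case analysis in the second step, as distinct orderings of $v(b_i),v(c_i),v(d)$ relative to $k_1,\ldots,k_N$ give rise to distinct elimination paths and hence distinct piecewise branches for $\gamma(\mathbf{Z})$. Two strategies should make this tractable. The first is to integrate out the border one row/column at a time, using the residual $\mathrm{GL}(N,\mathbb{Z}_p)$-biinvariance in $(b,c)$ at each stage to reduce vector integrations to one-dimensional valuation sums. The second, possibly cleaner, is to verify the identity on the level of the pushforward to singular numbers: this pushforward admits an explicit $q$-hypergeometric expression obtainable from the $\mathrm{GL}(N,\mathbb{Z}_p)^2$-invariance together with the decomposition (\ref{SingularDecomposition}), and consistency under the interlacing relation induced by $\Pi_N^{N+1}$ can be checked by a generating-function identity, avoiding any global manipulation of $\gamma(\mathbf{Z})$.
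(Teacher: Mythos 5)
The paper does not prove Proposition \ref{ConstructionInfiniteMeasure}: the consistency of the $p$-adic Hua measures under the corners maps is attributed to Neretin, so there is no internal argument to compare your proposal against. Judged on its own terms, what you have written is a correct plan rather than a proof. The reductions are all valid: the Fubini reduction to a pointwise (in $\mathbf{A}$) integral identity; the diagonalization of $\mathbf{A}$ via a pair in $\textnormal{GL}(N,\mathbb{Z}_p)\times\textnormal{GL}(N,\mathbb{Z}_p)$, which leaves $d\mathsf{vol}(b)\,d\mathsf{vol}(c)\,d\mathsf{vol}(d)$ unchanged since elements of $\textnormal{GL}(N,\mathbb{Z}_p)$ have unit-modulus determinant; and the target ratio $C_N/C_{N+1}$, which you compute correctly. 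Your second strategy of verifying consistency at the level of pushforwards to singular numbers is also sound in principle, because both $\mathsf{M}_N^{(s)}$ and $\left(\Pi_N^{N+1}\right)_*\mathsf{M}_{N+1}^{(s)}$ are $\textnormal{GL}(N,\mathbb{Z}_p)\times\textnormal{GL}(N,\mathbb{Z}_p)$-invariant, and any such probability measure on $\textnormal{Mat}(N,\mathbb{Q}_p)$ is uniquely determined by its pushforward under $\mathsf{Sing}_N$, each fiber carrying a unique invariant probability (the orbital measure). The concluding Kolmogorov and invariance steps are routine and you handle them correctly.

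The gap is that the one genuinely non-trivial step is left undone. You correctly identify the bottleneck, namely the Smith-normal-form case analysis expressing the singular numbers of the bordered matrix in terms of $v(b_i)$, $v(c_i)$, $v(d)$ and the $k_i$, but neither of your two strategies is actually carried out, and the claimed $q$-Pochhammer telescoping is asserted rather than exhibited. Be warned also that the cell decomposition is more delicate than indexing by the valuations of the border entries alone: ultrametric cancellation (e.g.\ in $d-\sum_i c_i p^{k_i}b_i$ and in the intermediate pivots) occurs on positive-measure subsets, so the combinatorics of the elimination paths branches further than your sketch indicates. For the second strategy you would in addition need the explicit cotransition kernel from $\Delta_{N+1}$ to $\Delta_N$, i.e.\ the conditional law of the singular numbers of the top-left $N\times N$ block of a Haar-random element of a fixed $\textnormal{GL}(N+1,\mathbb{Z}_p)$-double coset; this is a repackaging of the same Smith-form enumeration, not a shortcut around it. Until one of these computations is performed and shown to reproduce $\gamma(\mathbf{A})^{-s-2N}\,C_N/C_{N+1}$, the heart of the proposition remains unproven.
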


Finally, by combining the general results of Bufetov from \cite{Bufetov} for actions of inductively compact groups (thus applicable to the current setting) along with Theorem \ref{ClassificationTheorem} we obtain the following proposition, which gives the existence of a unique probability measure describing the ergodic decomposition of $\mathsf{M}^{(s)}$:

\begin{prop}\label{ErgodicDecompositionIntroduction}
Let $s>-1$. Then, there exists a unique probability measure $\nu^{(s)}$ on $\Delta$ such that:
\begin{align}
\mathsf{M}^{(s)}=\int_{\Delta}^{}\mu_{\mathbf{k}}\nu^{(s)}\left(d\mathbf{k}\right).
\end{align}
\end{prop}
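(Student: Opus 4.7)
The plan is essentially to verify that the hypotheses of Bufetov's general ergodic decomposition theorem from \cite{Bufetov} are satisfied in our setting and then transfer the resulting decomposition to a measure on $\Delta$ via the homeomorphism of Theorem \ref{ClassificationTheorem}.

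First I would observe that the acting group $G=\textnormal{GL}(\infty,\mathbb{Z}_p)\times \textnormal{GL}(\infty,\mathbb{Z}_p)$ is by construction the inductive limit of the compact groups $G_N=\textnormal{GL}(N,\mathbb{Z}_p)\times \textnormal{GL}(N,\mathbb{Z}_p)$, acting on the Borel space $\textnormal{Mat}(\mathbb{N},\mathbb{Q}_p)$. This is precisely the class of inductively compact group actions handled in \cite{Bufetov}. I would then note that $\textnormal{Mat}(\mathbb{N},\mathbb{Q}_p)$, equipped with the product topology inherited from the projective limit structure, is a Polish space, and that $\mathsf{M}^{(s)}\in \mathcal{P}_{\textnormal{inv}}(\textnormal{Mat}(\mathbb{N},\mathbb{Q}_p))$ by Proposition \ref{ConstructionInfiniteMeasure}, so the abstract hypotheses are met.

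Applying Bufetov's theorem then produces a unique Borel probability measure $\tilde{\nu}^{(s)}$ on $\mathcal{P}_{\textnormal{erg}}(\textnormal{Mat}(\mathbb{N},\mathbb{Q}_p))$ such that
\begin{align*}
\mathsf{M}^{(s)}=\int_{\mathcal{P}_{\textnormal{erg}}(\textnormal{Mat}(\mathbb{N},\mathbb{Q}_p))}\mathcal{M}\,\tilde{\nu}^{(s)}(d\mathcal{M}).
\end{align*}
By Theorem \ref{ClassificationTheorem}, the map $\mathbf{k}\mapsto \mu_{\mathbf{k}}$ is a homeomorphism between $\Delta$ and $\mathcal{P}_{\textnormal{erg}}(\textnormal{Mat}(\mathbb{N},\mathbb{Q}_p))$. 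Pulling $\tilde{\nu}^{(s)}$ back along this homeomorphism yields a Borel probability measure $\nu^{(s)}$ on $\Delta$, and rewriting the identity above gives $\mathsf{M}^{(s)}=\int_{\Delta}\mu_{\mathbf{k}}\,\nu^{(s)}(d\mathbf{k})$ as claimed.

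For uniqueness, I would argue that any two probability measures on $\Delta$ effecting this decomposition push forward under $\mathbf{k}\mapsto \mu_{\mathbf{k}}$ to two measures on $\mathcal{P}_{\textnormal{erg}}$ both implementing the ergodic decomposition of $\mathsf{M}^{(s)}$; by the uniqueness clause in Bufetov's theorem these push-forwards coincide, and the homeomorphism property of Theorem \ref{ClassificationTheorem} then forces the original measures on $\Delta$ to coincide as well. Since the entire argument is a formal citation-and-transfer, no step is really hard; the only mild point of care is checking that the topological and measurable structures line up so that Bufetov's framework applies and the push-forward/pull-back under the homeomorphism preserves the Borel structure, which is immediate from Theorem \ref{ClassificationTheorem}.
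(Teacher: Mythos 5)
Your argument matches the paper's: Proposition \ref{ErgodicDecompositionIntroduction} is stated there as a direct consequence of Bufetov's general ergodic decomposition theorem for inductively compact group actions from \cite{Bufetov}, transferred to $\Delta$ via the homeomorphism of Theorem \ref{ClassificationTheorem}, which is exactly the citation-and-transfer scheme you spell out. The only point you elaborate beyond the paper is the routine verification that the topological hypotheses line up, which the paper leaves implicit.
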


The main result of this paper, Theorem \ref{MainResult} below, is an explicit description of the measure $\nu^{(s)}$.

\subsection{Main result}

We first introduce for any $s>-1$ a certain Markov kernel $\mathsf{P}^{(s)}$ and probability measure $\pi^{(s)}$ on non-negative integers. The proof of the fact that $\mathsf{P}^{(s)}(x,\cdot)$ and $\pi^{(s)}(\cdot)$ are probability measures, namely that they correctly sum (over the non-negative integers) up to $1$, can be found in \cite{FulmanRogersRamanujan2}.
\begin{defn}\label{MainDefinition} Let $s>-1$. We use the notations $\mathbb{Z}_+=\{0,1,2,3,\dots\}$, $\llbracket0,x \rrbracket=\{0,1,\dots,x\}$ and $\mathbf{1}_{\mathcal{A}}$ for the indicator function of a set $\mathcal{A}$. We then define the following Markov kernel $\mathsf{P}^{(s)}$ on $\mathbb{Z}_+$ given by the formula, for $x_1,x_2\in \mathbb{Z}_+$:
\begin{align}\label{DefMarkovKernel}
\mathsf{P}^{(s)}\left(x_1,x_2\right)=\frac{p^{-x_2^2-sx_2}\left(p^{-1};p^{-1}\right)_{x_1}\left(p^{-1-s};p^{-1}\right)_{x_1}}{\left(p^{-1};p^{-1}\right)_{x_2}\left(p^{-1};p^{-1}\right)_{x_1-x_2}\left(p^{-1-s};p^{-1}\right)_{x_2}}\mathbf{1}_{\llbracket0,x_1 \rrbracket}(x_2).
\end{align}
For the distinguished case $s=0$ we simply write $\mathsf{P}=\mathsf{P}^{(0)}$. Moreover, define the probability measure $\pi^{(s)}$ on $\mathbb{Z}_+$ by the formula:
\begin{align*}
\pi^{(s)}(x)=\frac{p^{-x^2-sx}\left(p^{-1-s};p^{-1}\right)_\infty}{\left(p^{-1};p^{-1}\right)_x\left(p^{-1-s};p^{-1}\right)_x} , \ x\in \mathbb{Z}_+.
\end{align*}
\end{defn}
In the sequel we will also need the following subset of $\Delta$:
\begin{align*}
\Delta_{0}=\big\{\mathbf{k}=\left(k_j\right)_{j=1}^{\infty}\in \mathbb{Z}_+^{\mathbb{N}}:k_1\ge k_2\ge k_3 \ge \cdots; k_i\equiv 0 \textnormal{ for i large enough}\big\}.
\end{align*}
Finally, for each $i\in \mathbb{Z}\cup \{-\infty \}$ we define $l_i\left(\mathbf{k}\right)=\#\big\{j:k_j=i \big\}$. 

We now arrive at our main result which gives a complete explicit description of the probability measure $\nu^{(s)}$.

\begin{thm}\label{MainResult}
Let $s>-1$. Then, the probability measure $\nu^{(s)}$ is supported on $\Delta_0$. Moreover, a random $\mathbf{k}$ with $\mathsf{Law}\left(\mathbf{k}\right)=\nu^{(s)}$ can be sampled as follows, see also Figure \ref{Figure1} for an illustration:
\begin{enumerate}
\item Sample $\sum_{i=1}^{\infty}l_i(\mathbf{k})$ according to $\pi^{(s)}$.
\item To then sample $\sum_{i=2}^{\infty}l_i(\mathbf{k}), \ \sum_{i=3}^{\infty}l_i(\mathbf{k}), \ \sum_{i=4}^{\infty}l_i(\mathbf{k}), \dots$ run a discrete time Markov chain with transition kernel $\mathsf{P}^{(s)}$ starting from $\sum_{i=1}^{\infty}l_i(\mathbf{k})$; namely the conditional distribution of $\sum_{j=i+1}^{\infty}l_j(\mathbf{k})$ given $\sum_{j=i}^{\infty}l_j(\mathbf{k})$ is given by the probability measure $\mathsf{P}^{(s)}\left(\sum_{j=i}^{\infty}l_j(\mathbf{k}),\cdot\right)$.
\end{enumerate}
More precisely, the sequence $\{\mathsf{X}_i(\mathbf{k}) \}_{i=1}^{\infty}$ where $\mathsf{X}_i(\mathbf{k})=\sum_{j=i}^{\infty}l_j(\mathbf{k})$ forms a time homogeneous Markov chain on $\mathbb{Z}_+$ with initial distribution $\pi^{(s)}$ and transition kernel $\mathsf{P}^{(s)}$. Thus, since $\mathbf{k}\in \Delta_0$ is completely determined by $\{l_i(\mathbf{k})\}_{i=1}^{\infty}$, which is in turn determined by $\{\sum_{j=i}^{\infty}l_j(\mathbf{k})\}_{i=1}^{\infty}$, we can write $\nu^{(s)}$ explicitly:
\begin{align}
\nu^{(s)}(\mathbf{k})=\left(p^{-1-s};p^{-1}\right)_\infty p^{-\sum_{i=1}^{\infty}\left(\sum_{j=i}^{\infty}l_j(\mathbf{k})\right)^2-s\sum_{i=1}^{\infty}il_i(\mathbf{k})} \prod_{i=1}^{\infty}\frac{1}{\left(p^{-1};p^{-1}\right)_{l_i(\mathbf{k})}}, \ \ \mathbf{k}\in \Delta_0.
\end{align}
\end{thm}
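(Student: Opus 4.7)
The plan is to carry out the Vershik-Kerov ergodic method of approximation from finite dimensions, with the technically novel step being a Markov chain decomposition of the law of the singular numbers of an $N\times N$ $p$-adic matrix distributed according to $\mathsf{M}_N^{(s)}$. I would proceed in three main steps.

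\textbf{Step 1 (law of singular numbers at finite $N$).} Using the biinvariance of $\mathsf{M}_N^{(s)}$ under $\textnormal{GL}(N,\mathbb{Z}_p)\times\textnormal{GL}(N,\mathbb{Z}_p)$ together with the decomposition (\ref{SingularDecomposition}), one pushes $\mathsf{M}_N^{(s)}$ forward onto the singular numbers $\mathbf{k}^{(N)}=(k_1^{(N)}\ge\cdots\ge k_N^{(N)})$. The standard $p$-adic Jacobian for this change of variables, combined with the weight $\gamma(\mathbf{Z})^{-s-2N}$ from (\ref{HuaMeasuresDefinition}), produces a closed-form discrete distribution on ordered tuples of finite integers, expressible purely in terms of the level multiplicities $l_i(\mathbf{k}^{(N)})$ and products of $q$-Pochhammer symbols at $q=p^{-1}$. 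Because $\mathsf{M}_N^{(s)}$ is supported on $\textnormal{GL}(N,\mathbb{Q}_p)$, no singular number equals $-\infty$ at finite $N$.

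\textbf{Step 2 (Markov chain structure at finite $N$).} The key structural step is to show that this explicit law is generated by a two-chain sampling scheme: first draw a common initial state $X^{(N)}$ (e.g.\ the count of non-negative singular numbers) from some distribution $\pi_N^{(s)}$; then, conditional on $X^{(N)}$, run a forward Markov chain on the level counts $\mathsf{X}_i^{(N)}=\#\{j:k_j^{(N)}\ge i\}$ for $i\ge 1$, and independently (given $X^{(N)}$) a backward Markov chain tracking the non-positive level counts. The verification is a direct algebraic computation: one telescopes the factors $\prod_i (p^{-1};p^{-1})_{l_i}^{-1}$ by writing each $(p^{-1};p^{-1})_{l_i}^{-1}$ in terms of $(p^{-1};p^{-1})_{\mathsf{X}_i}$ and $(p^{-1};p^{-1})_{\mathsf{X}_{i+1}}$, and reorganizes $\gamma^{-s-2N}$ into successive conditional factors that, up to $N$-dependent corrections, match the kernel $\mathsf{P}^{(s)}$ of (\ref{DefMarkovKernel}). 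The coupling through the shared initial state is forced by the fact that the Hua weight $\gamma^{-s-2N}$ only penalizes positive singular numbers, breaking the symmetry between the two halves.

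\textbf{Step 3 ($N\to\infty$ and identification).} The Vershik-Kerov ergodic method in the present setting implies that for $\mathsf{M}^{(s)}$-almost every $\mathbf{Z}$ the singular numbers of $\Pi_N^{\infty}(\mathbf{Z})$ stabilize coordinate-wise to the parameter $\mathbf{k}$ of the ergodic component containing $\mathbf{Z}$; equivalently, the pushforwards of $\mathsf{M}_N^{(s)}$ onto any finite collection of coordinates converge weakly to those of $\nu^{(s)}$. Using $(p^{-1};p^{-1})_N\to (p^{-1};p^{-1})_\infty$ and elementary $q$-series calculations, one verifies that $\pi_N^{(s)}\to\pi^{(s)}$ and the finite-$N$ forward transitions converge to $\mathsf{P}^{(s)}$. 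The backward chain degenerates in the limit: the non-positive singular numbers are pushed past any fixed index, so the limit is supported on $\Delta_0$. Proposition \ref{ErgodicDecompositionIntroduction} then identifies the limit distribution with $\nu^{(s)}$, and multiplying the initial probability $\pi^{(s)}(\mathsf{X}_1(\mathbf{k}))$ by the product of $\mathsf{P}^{(s)}(\mathsf{X}_i(\mathbf{k}),\mathsf{X}_{i+1}(\mathbf{k}))$ and simplifying yields the explicit closed form for $\nu^{(s)}(\mathbf{k})$.

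The main obstacle is Step 2. The analogous one-chain structure for the pure volume measure was established by Evans, but in the present setting the genuine Hua weight introduces an asymmetry between positive and negative singular numbers, forcing a two-chain decomposition with a shared initial state. Finding the right choice of this shared state, verifying the Markov property along both chains, and matching the limiting forward kernel exactly with $\mathsf{P}^{(s)}$ in (\ref{DefMarkovKernel}) is the central technical contribution; once this is in hand, the large-$N$ limit in Step 3 reduces to routine asymptotics of $q$-Pochhammer symbols.
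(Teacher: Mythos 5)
Your proposal follows essentially the same route as the paper: push $\mathsf{M}_N^{(s)}$ onto its singular numbers, uncover a two-chain Markov representation with a shared initial state, and pass to the $N\to\infty$ limit via the Vershik--Kerov ergodic method. One refinement worth noting: in the paper (Theorems \ref{MarkovChainRep1} and \ref{MarkovChainRep2}) the forward kernel is \emph{exactly} $\mathsf{P}^{(s)}$ for every finite $N$ -- all $N$-dependence lives in the initial distributions $\tilde\pi_N^{(s)},\pi_N^{(s)}$ and in the backward chain's state space $\llbracket 0,N\rrbracket$ -- not merely ``up to $N$-dependent corrections'' as your Step~2 suggests, and it is precisely this $N$-independence of $\mathsf{P}^{(s)}$ that reduces the $N\to\infty$ step to verifying $\pi_N^{(s)}\to\pi^{(s)}$ together with a Borel--Cantelli/tightness argument for the $\Delta_0$-support.
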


\begin{figure}
\captionsetup{singlelinecheck = false, justification=justified}
\begin{tikzpicture}
               \draw[->,thick] (1,0) to [out=45,in=135] (2,0);
               
              \node[above] at (1.5,0.2) {$\pi^{(s)}$};
              \node[] at (3,0) {$\sum_{i=1}^{\infty}l_i(\mathbf{k})$};
              
                             \draw[->,thick] (4,0) to [out=45,in=135] (5,0);
                            \node[above] at (4.5,0.2) {$\mathsf{P}^{(s)}$};               
              \node[] at (6,0) {$\sum_{i=2}^{\infty}l_i(\mathbf{k})$};
              
              \draw[->,thick] (7,0) to [out=45,in=135] (8,0);
                                          \node[above] at (7.5,0.2) {$\mathsf{P}^{(s)}$};               
              \node[] at (9,0) {$\sum_{i=3}^{\infty}l_i(\mathbf{k})$};
              
                            \draw[->,thick] (10,0) to [out=45,in=135] (11,0);
                                         \node[above] at (10.5,0.2) {$\mathsf{P}^{(s)}$};                              
             \node[] at (12,0) {$\sum_{i=4}^{\infty}l_i(\mathbf{k})$};
             
                                         \draw[->,thick] (13,0) to [out=45,in=135] (14,0);
                                        \node[above] at (13.5,0.2) {$\mathsf{P}^{(s)}$};                                            
              \node[] at (14.5,0) {$\dots$};

 \end{tikzpicture}
 \caption{An illustration of the Markov chain from Theorem \ref{MainResult}.}\label{Figure1}
 \end{figure}
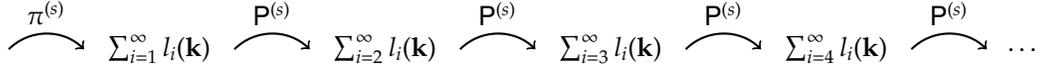

We conclude this section with a series of observations and remarks on consequences, connections and possible extensions of our main result. Observe that, from Theorem \ref{MainResult} we have:
\begin{align*}
\mathsf{Law}\left(\#\{i:k_i>0\}\right)=\pi^{(s)}
\end{align*}
and moreover if we consider the stopping time $\tau_0=\inf \{j: \mathsf{X}_j(\mathbf{k})=0 \}$
then
\begin{align*}
\mathsf{Law}\left(k_1\right)=\mathsf{Law}\left(\tau_0-1\right).
\end{align*}

\begin{rmk}
The results that follow in this paper can be transferred with mostly notational modifications to the more general setting of non-discrete non-Archimedean locally compact fields. The main correspondences are as follows. $\mathbb{Q}_p \leadsto F$, a non-discrete non-Archimedean locally compact field with absolute value $|\cdot|$. $\mathbb{Z}_p \leadsto \mathcal{O}_F=\{x \in F:|x|\le 1 \}$, the ring of integers of $F$. $p\leadsto \omega$, any generator of the ideal $\{x\in F: |x|<1 \}$. We have decided to restrict our attention to the $p$-adics $\mathbb{Q}_p$ since this is the setting of Neretin's works \cite{NeretinHuaMeasures}, \cite{NeretinBruhatTits} (in particular Proposition \ref{ConstructionInfiniteMeasure} above) and in order to simplify the exposition.
\end{rmk}

\begin{rmk} Observe that, we can identify $\Delta_0$ with the space of all partitions or equivalently all Young diagrams. Then, $\nu^{(s)}$ coincides with the probability measure on partitions $M_{(u,q)}$ from \cite{FulmanConjugacyClasses}, with the identification $u=p^{-s},q=p$, which arises in the study of the conjugacy classes of the finite general linear groups. This can be further identified, see Theorem 10 in \cite{FulmanConjugacyClasses}, with a special case of the Hall-Littlewood measures on partitions, a sub-class themselves of Macdonald measures and processes \cite{MacdonaldProcesses}.
\end{rmk}

\begin{rmk}Using the Rogers-Ramanujan identities, see for example \cite{FulmanRogersRamanujan1}, \cite{FulmanRogersRamanujan2}, it is possible to compute explicitly the cumulative distribution function of $k_1$ under $\nu^{(s)}$ for the special cases $s=0,1$. Namely, with $x\in \mathbb{N}$ with $x\ge 2$ we have:
\begin{align*}
\nu^{(0)}(k_1<x)&=\prod_{\substack{i=1\\i\equiv 0, \pm x \ \left(\textnormal{mod} \ 2x+1\right)} }^{\infty}\left(1-p^{-i}\right),\\
\nu^{(1)}(k_1<x)&=\prod_{\substack{i=2\\i\equiv 0, \pm 1 \ \left(\textnormal{mod} \ 2x+1\right)} }^{\infty}\left(1-p^{-i}\right).
\end{align*}
\end{rmk}


\subsection{Further directions}\label{FurtherDirectionsSection}
A number of interesting questions of classical random matrix theory over the real and complex numbers also make sense over the $p$-adics. Here, we point out two directions, closely related to the subject of this paper, which we believe are promising since the investigations of the corresponding questions over the complex numbers have been fruitful, see \cite{BorodinOlshanskiErgodic}, \cite{HuaPickrellDiffusions}, \cite{MatrixBougerol}.

\paragraph{Hua measures on $\textnormal{Sym}\left(\mathbb{N},\mathbb{Q}_p\right)$.} Neretin in the same paper \cite{NeretinHuaMeasures} where he introduced $\mathsf{M}^{(s)}$, also constructed the analogue $\mathsf{\Lambda}^{(s)}$ of the Hua measures on infinite symmetric $p$-adic matrices $\textnormal{Sym}\left(\mathbb{N},\mathbb{Q}_p\right)$. Moreover, Bufetov and Qiu in \cite{BufetovQiuClassificaiton} classified the ergodic measures on $\textnormal{Sym}\left(\mathbb{N},\mathbb{Q}_p\right)$ invariant under the natural action of $\textnormal{GL}\left(\infty,\mathbb{Q}_p\right)$:
\begin{align*}
\left(\mathbf{B},\mathbf{S}\right)\mapsto\mathbf{B}\mathbf{S}\mathbf{B}^{t}, \ \ \mathbf{B} \in \textnormal{GL}(\infty,\mathbb{Z}_p),\  \mathbf{S}\in \textnormal{Sym}\left(\mathbb{N},\mathbb{Q}_p\right),
\end{align*}
where $\mathbf{B}^{t}$ is the transposition of $\mathbf{B}$. Thus, a natural question is to study the ergodic decomposition of $\mathsf{\Lambda}^{(s)}$.

Looking at the formula of Neretin \cite{NeretinHuaMeasures} for the pushforward of $\mathsf{\Lambda}^{(s)}$ on $\textnormal{Sym}\left(N,\mathbb{Q}_p\right)$ we think that it is quite plausible that there exists a Markov chain structure underlying this measure as well. Nevertheless, there is a number of additional subtleties present when it comes to studying measures on $\textnormal{Sym}\left(\mathbb{N},\mathbb{Q}_p\right)$ compared to $\textnormal{Mat}\left(\mathbb{N},\mathbb{Q}_p\right)$, see \cite{NeretinHuaMeasures}, \cite{BufetovQiuClassificaiton} for more details, so we do expect that the solution to this problem of ergodic decomposition will be more involved.

\paragraph{Stochastic dynamics.} Another natural direction is to construct, for each $N\ge 1$, Markov dynamics $\left(\mathbb{X}_N^{(s)}(t);t\ge 0\right)$ on $\textnormal{Mat}\left(N,\mathbb{Q}_p\right)$, leaving the $p$-adic Hua measures $\mathsf{M}_N^{(s)}$ invariant (or having $\mathsf{M}_N^{(s)}$ as a fixed time distribution), and which are consistent under the corners maps $\Pi_N^{N+1}$. It would then be very interesting to understand how the singular numbers evolve under such dynamics. One would also hope that there would be some connections with dynamics for Hall-Littlewood measures and processes, see for example \cite{BorodinPetrov}, \cite{BufetovPetrovLLN}, \cite{BufetovPetrovYangBaxter}. The analogous questions over the complex numbers (in the Hermitian case) were investigated in \cite{HuaPickrellDiffusions}, \cite{MatrixBougerol}.

In fact, as far as we are aware, even the simpler stochastic processes, the analogues of Dyson Brownian motion on $\textnormal{Sym}\left(N,\mathbb{Q}_p\right)$ and Ginibre Brownian motion on $\textnormal{Mat}\left(N,\mathbb{Q}_p\right)$ have not yet been introduced. For $N=1$, there is vast literature on different constructions, some of them equivalent, of diffusion and continuous time random walks on $\mathbb{Q}_p$, as a first reference see for example \cite{AlbeverioDiffusion}, \cite{AlbeverioRandomWalks}, \cite{DelMutoFigaTalamanca}, \cite{EvansLevy} (also see \cite{EvansLocalField} for a stochastic process indexed by the $p$-adics). However, it is not clear to us, at least at present, which of these stochastic processes, if any, is the correct choice to generalize to the matrix setting.

\paragraph{Acknowledgements} I would like to thank Valeriya Kovaleva for a useful discussion on background for $p$-adic matrices. I am also grateful to Leonid Petrov for a number of interesting remarks. I would like to thank the anonymous referees for a careful reading of the paper and comments and suggestions which have improved the presentation. The research described here was partly supported by ERC Advanced Grant  740900 (LogCorRM).  

\section{Asymptotic approximation for the ergodic decomposition measure}\label{AsymptoticApproximationSection}

The goal of this section is to give a concrete way to describe the abstract measure $\nu^{(s)}$ from Proposition \ref{ErgodicDecompositionIntroduction}. The approach we take is an adaptation to this setting of the Vershik-Kerov ergodic method \cite{VershikErgodic} and crucially relies on the classification results of Bufetov and Qiu \cite{BufetovQiuClassificaiton} and some general arguments of Borodin and Olshanski that can be found for example in \cite{BorodinOlshanskiErgodic}. This approach applies to any probability measure in $\mathcal{P}_{\textnormal{inv}}\left(\textnormal{Mat}\left(\mathbb{N},\mathbb{Q}_p\right)\right)$, not necessarily $\mathsf{M}^{(s)}$. We begin with the following proposition, again a consequence of the results of \cite{Bufetov} and Theorem \ref{ClassificationTheorem} (comparing with the notation conventions used for Proposition \ref{ErgodicDecompositionIntroduction} we have $\nu^{(s)}=\nu^{\mathsf{M}^{(s)}}$).
\begin{prop}\label{ErgodicDecompositionProp}
Let $\mathcal{M}\in \mathcal{P}_{\textnormal{inv}}\left(\textnormal{Mat}\left(\mathbb{N},\mathbb{Q}_p\right)\right)$. Then, there exists a unique probability measure $\nu^{\mathcal{M}}$ on $\Delta$ such that:
\begin{align*}
\mathcal{M}=\int_{\Delta}^{}\mu_{\mathbf{k}}\nu^{\mathcal{M}}(d\mathbf{k}),
\end{align*}
which means that for any bounded Borel function $\mathfrak{F}$ on $\textnormal{Mat}\left(\mathbb{N},\mathbb{Q}_p\right)$:
\begin{align*}
\mathcal{M}\left(\mathfrak{F}\right)=\int_{\Delta}^{}\mu_{\mathbf{k}}\left(\mathfrak{F}\right)\nu^{\mathcal{M}}(d\mathbf{k}).
\end{align*}
\end{prop}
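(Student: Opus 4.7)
The plan is to invoke the general ergodic decomposition theorem of Bufetov for actions of inductively compact groups \cite{Bufetov} and then transfer the decomposition from the abstract space $\mathcal{P}_{\textnormal{erg}}\left(\textnormal{Mat}\left(\mathbb{N},\mathbb{Q}_p\right)\right)$ to the concrete parameter space $\Delta$ by means of the homeomorphism of Theorem \ref{ClassificationTheorem}. The proposition is essentially a bookkeeping statement combining these two inputs, so the role of the proof is to check the hypotheses and perform the transfer cleanly.

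First I would verify that the setting fits Bufetov's framework: the group $\textnormal{GL}(\infty,\mathbb{Z}_p)\times \textnormal{GL}(\infty,\mathbb{Z}_p)$ is the inductive limit of the compact groups $\textnormal{GL}(N,\mathbb{Z}_p)\times \textnormal{GL}(N,\mathbb{Z}_p)$, and hence inductively compact in the sense of \cite{Bufetov}; the space $\textnormal{Mat}\left(\mathbb{N},\mathbb{Q}_p\right)$ is a Polish space (as a countable product of the Polish space $\mathbb{Q}_p$) and the action is jointly continuous. Applying Bufetov's theorem then yields a unique probability measure $\tilde{\nu}^{\mathcal{M}}$ on $\mathcal{P}_{\textnormal{erg}}\left(\textnormal{Mat}\left(\mathbb{N},\mathbb{Q}_p\right)\right)$, equipped with its Borel structure induced by the weak topology, such that
\begin{align*}
\mathcal{M}(\mathfrak{F})=\int_{\mathcal{P}_{\textnormal{erg}}\left(\textnormal{Mat}\left(\mathbb{N},\mathbb{Q}_p\right)\right)}\eta(\mathfrak{F})\, \tilde{\nu}^{\mathcal{M}}(d\eta)
\end{align*}
for every bounded Borel function $\mathfrak{F}$ on $\textnormal{Mat}\left(\mathbb{N},\mathbb{Q}_p\right)$; here the integrand $\eta\mapsto \eta(\mathfrak{F})$ is Borel as can be seen from the bounded continuous case by a monotone class argument.

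Second, I would use the homeomorphism $\Phi:\Delta\to\mathcal{P}_{\textnormal{erg}}\left(\textnormal{Mat}\left(\mathbb{N},\mathbb{Q}_p\right)\right)$, $\mathbf{k}\mapsto \mu_{\mathbf{k}}$, provided by Theorem \ref{ClassificationTheorem}. Being a homeomorphism, $\Phi$ is in particular a Borel isomorphism. Define
\begin{align*}
\nu^{\mathcal{M}}:=\left(\Phi^{-1}\right)_{*}\tilde{\nu}^{\mathcal{M}},
\end{align*}
a Borel probability measure on $\Delta$. The change of variables formula, applied to the Borel map $\mathbf{k}\mapsto \mu_{\mathbf{k}}(\mathfrak{F})$, immediately gives
\begin{align*}
\mathcal{M}(\mathfrak{F})=\int_{\Delta}\mu_{\mathbf{k}}(\mathfrak{F})\,\nu^{\mathcal{M}}(d\mathbf{k}),
\end{align*}
which is the desired integral representation.

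Finally, for uniqueness, suppose $\nu'$ and $\nu''$ are two probability measures on $\Delta$ both providing such a decomposition of $\mathcal{M}$. Pushing forward under $\Phi$, both $\Phi_{*}\nu'$ and $\Phi_{*}\nu''$ are probability measures on $\mathcal{P}_{\textnormal{erg}}\left(\textnormal{Mat}\left(\mathbb{N},\mathbb{Q}_p\right)\right)$ satisfying the conclusion of Bufetov's theorem for $\mathcal{M}$; the uniqueness in that theorem forces $\Phi_{*}\nu'=\Phi_{*}\nu''$, and since $\Phi$ is a Borel isomorphism this yields $\nu'=\nu''$. The only genuine content here is the invocation of Bufetov's theorem, and there is no real obstacle to overcome beyond bookkeeping; the technical work was done in \cite{Bufetov} and \cite{BufetovQiuClassificaiton}.
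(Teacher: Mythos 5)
Your proposal is correct and takes the same route the paper intends: the paper states this proposition as an immediate consequence of Bufetov's ergodic decomposition theorem for inductively compact group actions \cite{Bufetov} combined with the homeomorphism $\mathbf{k}\mapsto\mu_{\mathbf{k}}$ of Theorem \ref{ClassificationTheorem}, without writing out the transfer; your argument simply fills in that transfer via pushforward under the Borel isomorphism, which is exactly the intended (and correct) bookkeeping.
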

We need a number of definitions and notation that will be used throughout the paper. For any $N\ge 1$ we define:
\begin{align*}
\Delta_N=\big\{\mathbf{k}=\left(k_1,\dots,k_N\right)\in \left(\mathbb{Z}\cup\{-\infty\}\right)^N:k_1\ge k_2\ge \cdots \ge k_N \big\}.
\end{align*}
For a matrix $\mathbf{Z}\in \textnormal{Mat}\left(N,\mathbb{Q}_p\right)$ with decomposition as in (\ref{SingularDecomposition}) we consider the map $\mathsf{Sing}_N:\textnormal{Mat}\left(N,\mathbb{Q}_p\right) \to \Delta_N$ given by:
\begin{align*}
\mathsf{Sing}_N\left(\mathbf{Z}\right)=\left(k_1\ge k_2 \ge \cdots \ge k_N\right).
\end{align*}
Then, for $\mathbf{Z}\in \textnormal{Mat}\left(\mathbb{N},\mathbb{Q}_p\right)$ we can define $\mathbf{k}^{(N)}\left(\mathbf{Z}\right)=\left(k_1^{(N)}\left(\mathbf{Z}\right)\ge \cdots \ge k_N^{(N)}\left(\mathbf{Z}\right)\right)\in \Delta_N$ by:
\begin{align*}
\left(k_1^{(N)}\left(\mathbf{Z}\right)\ge \cdots \ge k_N^{(N)}\left(\mathbf{Z}\right)\right)=\mathsf{Sing}_N\left(\Pi_N^{\infty}\left(\mathbf{Z}\right)\right).
\end{align*}
Moreover, for each $i\in \mathbb{Z}\cup \{-\infty \}$ we define $l^{(N)}_i\left(\mathbf{Z}\right)=\#\big\{j:k_j^{(N)}\left(\mathbf{Z}\right)=i \big\}$ and observe that $\sum_{-\infty}^{\infty}l_i^{(N)}(\mathbf{Z})=N$.
\begin{defn} We define the subset $\textnormal{Mat}_{\textnormal{reg}}\left(\mathbb{N},\mathbb{Q}_p\right)\subset \textnormal{Mat}\left(\mathbb{N},\mathbb{Q}_p\right)$ by requiring that for each $\mathbf{Z}\in \textnormal{Mat}_{\textnormal{reg}}\left(\mathbb{N},\mathbb{Q}_p\right)$ the following limits exist:
\begin{align*}
\lim_{N\ge j}k_j^{(N)}\left(\mathbf{Z}\right)\overset{\textnormal{def}}{=}k_j\left(\mathbf{Z}\right), \ j=1,2,3,\dots,
\end{align*}
in which case we write $\mathbf{k}^{(N)}\left(\mathbf{Z}\right)\longrightarrow \mathbf{k}\left(\mathbf{Z}\right)$. Furthermore, for each $i\in \mathbb{Z}\cup \{-\infty \}$ we define $l_i\left(\mathbf{Z}\right)=\#\big\{j:k_j\left(\mathbf{Z}\right)=i \big\}$.
\end{defn}
Observe that, $\textnormal{Mat}_{\textnormal{reg}}\left(\mathbb{N},\mathbb{Q}_p\right)$ is a Borel subset of $\textnormal{Mat}\left(\mathbb{N},\mathbb{Q}_p\right)\simeq \mathbb{Q}_p^{\mathbb{N}\times \mathbb{N}}$ equipped with Tychonoff's product topology. With all these definitions in place we can now state the main result of this section. This will be a consequence of the results on the classification of ergodic measures from \cite{BufetovQiuClassificaiton} and some generic abstract arguments of Borodin and Olshanski from \cite{BorodinOlshanskiErgodic}. Since the arguments for the proof of Proposition \ref{PropositionApproximation} are rather abstract and very different from the ones required in the rest of the paper, the proof is deferred to the Appendix in Section \ref{Appendix}.

\begin{prop}\label{PropositionApproximation}
Let $\mathcal{M} \in \mathcal{P}_{\textnormal{inv}}\left(\textnormal{Mat}\left(\mathbb{N},\mathbb{Q}_p\right)\right)$. Then, $\mathcal{M}$ is supported on $\textnormal{Mat}_{\textnormal{reg}}\left(\mathbb{N},\mathbb{Q}_p\right)$. Moreover, the distribution of $\mathbf{k}$ under $\nu^{\mathcal{M}}$ coincides with:
\begin{align*}
\mathsf{Law}\left(k_1\left(\mathbf{Z}\right),k_2\left(\mathbf{Z}\right),k_3\left(\mathbf{Z}\right),\dots\right), \ \textnormal{ with } \mathsf{Law}\left(\mathbf{Z}\right)=\mathcal{M}.
\end{align*}
\end{prop}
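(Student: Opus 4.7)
The plan is a two-step reduction: first use Proposition \ref{ErgodicDecompositionProp} to pass to ergodic components, then verify the claim pointwise for each $\mu_{\mathbf{k}}$. Granted the fiberwise facts (a) $\mu_{\mathbf{k}}(\textnormal{Mat}_{\textnormal{reg}}(\mathbb{N},\mathbb{Q}_p))=1$ and (b) $k_j(\mathbf{Z})=k_j$ for $\mu_{\mathbf{k}}$-a.e.\ $\mathbf{Z}$ and every $j$, the full statement follows by integrating: $\mathcal{M}(\textnormal{Mat}_{\textnormal{reg}})=\int_\Delta \mu_{\mathbf{k}}(\textnormal{Mat}_{\textnormal{reg}})\,d\nu^{\mathcal{M}}(\mathbf{k})=1$, and for any bounded Borel $f\colon\Delta\to\mathbb{R}$,
\begin{align*}
\mathcal{M}(f\circ\mathbf{k})=\int_\Delta\!\int f(\mathbf{k}(\mathbf{Z}))\,d\mu_{\mathbf{k}}(\mathbf{Z})\,d\nu^{\mathcal{M}}(\mathbf{k})=\int_\Delta f(\mathbf{k})\,d\nu^{\mathcal{M}}(\mathbf{k}),
\end{align*}
identifying $\mathsf{Law}(\mathbf{k}(\mathbf{Z}))$ under $\mathcal{M}$ with $\nu^{\mathcal{M}}$.

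For the fiberwise claim I would work directly with the explicit representation (\ref{DefInfiniteMatrix}). Writing $\mathbf{x}_N^{(M)}=(X_1^{(M)},\dots,X_N^{(M)})^t$ and analogously for $\mathbf{y}_N^{(M)}$, the $N\times N$ corner of $\mathbf{A}_{\mathbf{k}}$ is
\begin{align*}
\Pi_N^\infty(\mathbf{A}_{\mathbf{k}})=\sum_{M:\,k_M>k}p^{-k_M}\,\mathbf{x}_N^{(M)}(\mathbf{y}_N^{(M)})^t+p^{-k}\mathbf{Z}_N,
\end{align*}
where $\mathbf{Z}_N$ is an independent $N\times N$ Haar-$\mathbb{Z}_p$ noise. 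Since $\mathbb{P}(|X_i^{(M)}|=1)=1-p^{-1}$, Borel--Cantelli guarantees that for every such $M$ there almost surely exist indices $i,j$ with $|X_i^{(M)}|=|Y_j^{(M)}|=1$; consequently the rank-one term $p^{-k_M}\mathbf{x}_N^{(M)}(\mathbf{y}_N^{(M)})^t$ eventually contributes a singular number exactly equal to $k_M$. Combining this with the ultrametric inequality and the stability of the Smith normal form under the addition of rank-one pieces of strictly larger scale, an induction on $M$ (from the largest $k_M$ downwards) shows that $k_j^{(N)}(\mathbf{A}_{\mathbf{k}})$ stabilises at $k_j$ for $N$ large enough, while the remaining singular numbers are supplied by the noise term at scale $p^{-k}$ (resp.\ drift to $-\infty$ when $k=-\infty$). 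This simultaneously yields (a) and (b).

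That $\mathbf{k}(\mathbf{Z})$ is $\mu_{\mathbf{k}}$-a.s.\ constant can also be seen abstractly: $\mathbf{Z}\mapsto k_j^{(M)}(\mathbf{Z})$ is $\textnormal{GL}(M,\mathbb{Z}_p)\times\textnormal{GL}(M,\mathbb{Z}_p)$-invariant on the $M\times M$ corner, whereas every $(\mathbf{B},\mathbf{C})\in \textnormal{GL}(\infty,\mathbb{Z}_p)\times\textnormal{GL}(\infty,\mathbb{Z}_p)$ acts, for all sufficiently large $M$, through its top-left block in this smaller product; hence the limit $\mathbf{k}(\cdot)$ is invariant under the full group action where defined and must be a.s.\ constant under any ergodic measure. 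Identifying this constant with the parameter $\mathbf{k}$ itself still requires the explicit Smith-normal-form analysis above.

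The main obstacle is the stability claim underlying step (a), particularly in the two extreme regimes: when $k=-\infty$ the series has potentially infinitely many rank-one terms whose cumulative influence on each fixed singular number $k_j$ must be controlled (ensuring no interference from the tail at lower scales), and when $k$ is finite the noise term $p^{-k}\mathbf{Z}_N$ must almost surely exhibit singular number $k$ with the correct (growing) multiplicity. A related subtlety is handling coincidences $k_M=k_{M'}$, for which one needs a $p$-adic nondegeneracy statement for the corresponding sum of rank-one pieces restricted to indices of unit absolute value. These are essentially the content of the classification work \cite{BufetovQiuClassificaiton} and of the general Vershik--Kerov / Borodin--Olshanski machinery \cite{BorodinOlshanskiErgodic}, which the present paper invokes rather than reproves, so the argument in the appendix should be primarily the measure-theoretic reduction combined with a citation of the singular-number stability input.
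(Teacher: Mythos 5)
Your measure-theoretic reduction (integrate the two fiberwise facts against $\nu^{\mathcal{M}}$, identify $\left(\mathsf{Sing}\right)_*\mathcal{M}$ with $\nu^{\mathcal{M}}$ via the barycenter formula) is exactly the content of Propositions \ref{ErgodicProp1}--\ref{ErgodicProp2} in the appendix, so that part matches the paper. Where you diverge is in how the fiberwise facts (a) and (b) are established. The paper does \emph{not} touch the explicit matrix representation~(\ref{DefInfiniteMatrix}) at all: it introduces the orbital measures $\mathsf{Orb}_{\mathbf{k}^{(N)}}$, proves in Proposition \ref{OrbitalMeasuresProp} that weak convergence $\mathsf{Orb}_{\mathbf{k}^{(N)}}\Rightarrow\mu$ is equivalent to $\mathbf{k}^{(N)}\to\mathbf{k}$ (both directions resting on the Bufetov--Qiu inputs: Theorem 8.8, Lemma 8.6 and Proposition 5.1 of \cite{BufetovQiuClassificaiton}), and then appeals to Vershik's ergodic theorem, which says that for ergodic $\mu_{\mathbf{k}}$ the random orbital measures $\mathsf{Orb}_{\mathbf{k}^{(N)}(\mathbf{Z})}$ converge weakly to $\mu_{\mathbf{k}}$ for $\mu_{\mathbf{k}}$-a.e.\ $\mathbf{Z}$. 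Reading Proposition \ref{OrbitalMeasuresProp} backwards immediately gives $\mathbf{k}^{(N)}(\mathbf{Z})\to\mathbf{k}$ a.e., which is (a) and (b) in one stroke. This is the standard Vershik--Kerov / Borodin--Olshanski abstraction and is what the appendix actually does.

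Your alternative — a hands-on Smith-normal-form analysis of $\Pi_N^\infty(\mathbf{A}_{\mathbf{k}})$ — is plausible in spirit but, as written, is genuinely incomplete in ways you partly acknowledge. The Borel--Cantelli observation that some coordinate of $\mathbf{x}_N^{(M)}$ is a unit does not by itself show that the rank-one piece ``contributes a singular number exactly $k_M$'': singular numbers of a sum are not the union of the summands' singular numbers, and one has to argue simultaneously about all the pieces, including (i) the genericity (a.s.\ linear independence mod $p$) of the vectors attached to coinciding $k_M$'s, (ii) the correct multiplicity supplied by the noise block when $k$ is finite, and (iii) uniform control of the infinite tail when $k=-\infty$. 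None of this is hard in isolation, but it amounts to re-deriving a piece of the Bufetov--Qiu classification rather than citing it. Your fallback abstract argument (invariance of $\mathbf{k}(\cdot)$ implies it is a.s.\ constant under an ergodic measure) correctly yields constancy but, as you note, neither guarantees the limits exist a.e.\ nor identifies the constant; both gaps are precisely what the orbital-measure route closes. So the proposal would become a proof if the fiberwise facts were cited as in the paper (Vershik's ergodic theorem plus Proposition \ref{OrbitalMeasuresProp}) rather than rederived from the explicit representation.
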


In plain words what the proposition above says is the following: in order to describe the ergodic decomposition measure $\nu^{\mathcal{M}}$ one needs to study the asymptotic singular numbers of the corners of an $\mathcal{M}$-distributed infinite random matrix $\mathbf{Z}$.

\section{Proof of the main result}\label{ProofSection}
As explained in Section \ref{AsymptoticApproximationSection} in order to prove Theorem \ref{MainResult} we need to study the $N\to \infty$ limit of the singular numbers of $\mathsf{M}_N^{(s)}$-distributed random matrices. The following proposition gives a preliminary formula for the induced law of these singular numbers that we denote by $\mathsf{m}_N^{(s)}$.
\begin{prop}\label{SingValuesMeasure1}
Let $N\in\mathbb{N}$ and $s>-1$. Then,
\begin{align*}
\mathsf{m}_N^{(s)}\left(\mathbf{k}\right)=\mathsf{m}_N^{(s)}\left(k_1,\dots,k_N\right)&=\left[\left(\mathsf{Sing}_N\right)_*\mathsf{M}_N^{(s)}\right]\left(k_1,\dots,k_N\right)\\
&=\frac{\left(p^{-1-s};p^{-1}\right)^2_{N}}{\left(p^{-1-s};p^{-1}\right)_{2N}}p^{-(s+2N)\sum_{k_j>0}^{}k_j-\sum_{j=1}^{N}(2j-2N-1)k_j}\frac{\left(p^{-1};p^{-1}\right)_N^2}{\prod_{-\infty}^{\infty}\left(p^{-1};p^{-1}\right)_{l_i\left(\mathbf{k}\right)}},
\end{align*}
where $l_{i}=l_i\left(\mathbf{k}\right)=\#\big\{j:k_j=i \big\}$.
\end{prop}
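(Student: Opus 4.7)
The plan is to push the measure (\ref{HuaMeasuresDefinition}) forward along $\mathsf{Sing}_N$ directly. Since $\gamma$ depends on $\mathbf{Z}$ only through its singular numbers, the density of $\mathsf{M}_N^{(s)}$ with respect to $d\mathsf{vol}$ is constant on each fibre $\Omega_{\mathbf{k}}:=\mathsf{Sing}_N^{-1}(\mathbf{k})$, equal to the prefactor $\frac{(p^{-1-s};p^{-1})_N^2}{(p^{-1-s};p^{-1})_{2N}}\, p^{-(s+2N)\sum_{k_j>0} k_j}$. Consequently, the whole problem reduces to computing $\mathsf{vol}(\Omega_{\mathbf{k}})$.

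By (\ref{SingularDecomposition}), this fibre is a double coset $\Omega_{\mathbf{k}} = K D_{\mathbf{k}} K$ with $K := \textnormal{GL}(N,\mathbb{Z}_p)$ and $D_{\mathbf{k}} := \textnormal{diag}(p^{-k_1},\dots,p^{-k_N})$. I would first switch from $\mathsf{vol}$ to Haar measure via $d\mathsf{vol} = (p^{-1};p^{-1})_N |\det|^N d\mathsf{Haar}$, noting $|\det \mathbf{Z}| = p^{\sum_j k_j}$ on $\Omega_{\mathbf{k}}$, and then use the elementary identity $\mathsf{Haar}(KDK) = [K : K \cap DKD^{-1}]$ (which follows by decomposing $KDK$ into left $K$-cosets, using $\mathsf{Haar}(K) = 1$). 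This gives
\[
\mathsf{vol}(\Omega_{\mathbf{k}}) = (p^{-1};p^{-1})_N\, p^{N\sum_j k_j}\, [K : K \cap D_{\mathbf{k}} K D_{\mathbf{k}}^{-1}].
\]

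The heart of the argument is therefore the computation of $[K : K \cap D_{\mathbf{k}} K D_{\mathbf{k}}^{-1}]$. Grouping coordinates by the distinct values $\kappa_1 > \cdots > \kappa_r$ of $\mathbf{k}$, with block sizes $l_{\kappa_a}=l_{\kappa_a}(\mathbf{k})$, a direct computation shows that $g \in K$ satisfies $D_{\mathbf{k}}^{-1} g D_{\mathbf{k}} \in \textnormal{Mat}(N,\mathbb{Z}_p)$ iff the $(a,b)$-block entries of $g$ (for $a > b$) lie in $p^{\kappa_b-\kappa_a}\mathbb{Z}_p$, while the other blocks need only belong to $\mathbb{Z}_p$. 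Since the strictly lower blocks then vanish mod $p$, the membership $g \in K$ localizes to each diagonal block being invertible mod $p$, and altogether
\[
\mathsf{vol}(K \cap D_{\mathbf{k}} K D_{\mathbf{k}}^{-1}) = p^{-\sum_{a>b} l_{\kappa_a} l_{\kappa_b}(\kappa_b-\kappa_a)} \prod_a (p^{-1};p^{-1})_{l_{\kappa_a}},
\]
which upon dividing by $\mathsf{vol}(K) = (p^{-1};p^{-1})_N$ produces the desired index.

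What remains is a purely combinatorial reshuffling: one verifies
\[
N \sum_j k_j + \sum_{a>b} l_{\kappa_a} l_{\kappa_b}(\kappa_b - \kappa_a) = -\sum_{j=1}^{N}(2j-2N-1)\, k_j,
\]
by expanding block-by-block and using $\sum_{j\in\text{block }a}(2j - 2N - 1) = l_{\kappa_a}(2J_{a-1} + l_{\kappa_a} - 2N)$, where $J_a := \sum_{b \leq a} l_{\kappa_b}$. Assembling this with the constant density $p^{-(s+2N)\sum_{k_j > 0} k_j}$ and the identification $\prod_a (p^{-1};p^{-1})_{l_{\kappa_a}} = \prod_{-\infty}^{\infty}(p^{-1};p^{-1})_{l_i(\mathbf{k})}$ yields the claimed formula. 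The main obstacle is not conceptual; rather, it is the careful block bookkeeping in the analysis of $K \cap D_{\mathbf{k}} K D_{\mathbf{k}}^{-1}$, where one must correctly track how conjugation scales entries and how the invertibility constraint modulo $p$ decouples across the diagonal blocks.
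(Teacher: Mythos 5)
Your proof is correct, and at the top level it runs along the same lines as the paper's: reduce to the pushforward of $\mathsf{vol}$ under $\mathsf{Sing}_N$ (since the Hua density $\gamma(\mathbf{Z})^{-s-2N}$ is constant on the fibre $\Omega_{\mathbf{k}}$), then compute $\mathsf{vol}(\Omega_{\mathbf{k}})$ via the $|\det|^N$ relation between $\mathsf{vol}$ and $\mathsf{Haar}$ and the Haar volume of the double coset $K D_{\mathbf{k}} K$. The difference is that the paper imports the double-coset volume directly from Macdonald's book (display (2.9), p.~298, Chapter V), whereas you derive it from scratch: you write $\mathsf{Haar}(KD_{\mathbf{k}}K) = [K : K \cap D_{\mathbf{k}} K D_{\mathbf{k}}^{-1}]$, compute $\mathsf{vol}(K \cap D_{\mathbf{k}} K D_{\mathbf{k}}^{-1})$ block-by-block (the blocks below the diagonal land in $p^{\kappa_b - \kappa_a}\mathbb{Z}_p$, which forces block upper triangularity mod $p$ and decouples the $\textnormal{GL}$ constraints across diagonal blocks), and then verify the combinatorial identity $N\sum_j k_j + \sum_{a>b} l_{\kappa_a} l_{\kappa_b}(\kappa_b - \kappa_a) = -\sum_j (2j-2N-1)k_j$. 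I checked the block-volume computation and the final reshuffling (using $J_a = \sum_{b\le a} l_{\kappa_b}$, the left side reduces to $\sum_a \kappa_a l_{\kappa_a}(N - J_{a-1} - J_a)$, which matches), so the proof is sound. What your version buys is self-containedness: it reproves the relevant case of Macdonald's formula rather than citing it, and in doing so makes visible exactly where the $q$-Pochhammer factors and the exponent $\sum(2j-2N-1)k_j$ come from.
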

\begin{proof} 
The statement of the proposition follows immediately from formula (\ref{HuaMeasuresDefinition}) and the fact that:
\begin{align*}
\left[\left(\mathsf{Sing}_N\right)_*\mathsf{vol}\right]\left(k_1,\dots,k_N\right)=p^{-\sum_{i=1}^{N}\left(2i-2N-1\right)k_i}\frac{\left(p^{-1};p^{-1}\right)_N^2}{\prod_{-\infty}^{\infty}\left(p^{-1};p^{-1}\right)_{l_i\left(\mathbf{k}\right)}},
\end{align*}
which can be easily deduced from results in Chapter V of Macdonald's classical book on symmetric functions \cite{Macdonald}. More precisely, it is established by using display (2.9) on page 298 in Chapter V of \cite{Macdonald}, which with the notation conventions of the present paper reads as follows:
\begin{align*}
\mathsf{Haar}\left(\textnormal{GL}\left(N,\mathbb{Z}_p\right)\textnormal{diag}\left(p^{-k_1},p^{-k_2},\dots,p^{-k_N}\right) \textnormal{GL}\left(N,\mathbb{Z}_p\right)\right)=p^{-\sum_{i=1}^{N}\left(2i-N-1\right)k_i}\frac{\left(p^{-1};p^{-1}\right)_N}{\prod_{-\infty}^{\infty}\left(p^{-1};p^{-1}\right)_{l_i\left(\mathbf{k}\right)}}
\end{align*}
and the formula:
\begin{align*}
d\mathsf{vol}\left(\mathbf{Z}\right)&=\left(p^{-1};p^{-1}\right)_N|\det\left(\mathbf{Z}\right)|^{N}d\mathsf{Haar}\left(\mathbf{Z}\right)\\
&=\left(p^{-1};p^{-1}\right)_Np^{N\sum_{j=1}^{N}k_j}d\mathsf{Haar}\left(\mathbf{Z}\right).
\end{align*}
\end{proof}

Although the formula for $\mathsf{m}_N^{(s)}$ in Proposition \ref{SingValuesMeasure1} above has a rather pleasant form it is not very convenient for analysis since it involves both the variables $\{k_i\}_{i=1}^N$ and $\{l_i(\mathbf{k}) \}_{-\infty}^\infty$. Our next goal is to write this formula only in terms of the $\{l_i(\mathbf{k}) \}_{-\infty}^\infty$. This is achieved through the following elementary combinatorial lemma.

\begin{lem}\label{LemmaRewriting} Let $\mathbf{k}=\left(k_1,k_2,\dots,k_N\right)\in \Delta_N$; so that in particular $\sum_{-\infty}^{\infty}l_j\left(\mathbf{k}\right)=N$. Then, we have:
\begin{enumerate}
\item $\sum_{j=1}^{N}\mathbf{1}\left(k_j>0\right)k_j\left(2j-1\right)=\left(\sum_{j=1}^{\infty}l_j\left(\mathbf{k}\right)\right)^2+\left(\sum_{j=2}^{\infty}l_j\left(\mathbf{k}\right)\right)^2+\left(\sum_{j=3}^{\infty}l_j\left(\mathbf{k}\right)\right)^2+\cdots\\=\left(N-\sum_{-\infty}^{0}l_j\left(\mathbf{k}\right)\right)^2+\left(N-\sum_{-\infty}^{1}l_j\left(\mathbf{k}\right)\right)^2+\left(N-\sum_{-\infty}^{2}l_j\left(\mathbf{k}\right)\right)^2+\cdots$.
\item $\sum_{j=1}^{N}\mathbf{1}\left(k_j\le 0\right)k_j\left(2j-2N-1\right)=\left(\sum_{-\infty}^{-1}l_j\left(\mathbf{k}\right)\right)^2+\left(\sum_{-\infty}^{-2}l_j\left(\mathbf{k}\right)\right)^2+\left(\sum_{-\infty}^{-3}l_j\left(\mathbf{k}\right)\right)^2+\cdots$.
\item $\sum_{j=1}^{N}\mathbf{1}\left(k_j>0\right)k_j=\sum_{j=0}^{\infty}jl_j\left(\mathbf{k}\right)=\sum_{j=1}^{\infty}jl_j\left(\mathbf{k}\right).$
\end{enumerate}
\end{lem}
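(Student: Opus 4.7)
The statement is a triple of combinatorial rewritings of sums indexed by the parts $k_j$ (and their signed contributions) in terms of the multiplicities $l_i(\mathbf{k})$. My plan is to treat each identity by a ``layer-cake'' (or Abel-summation) rearrangement, combined with the arithmetic-progression identity $1+3+5+\cdots+(2n-1)=n^2$. The key observation enabling clean sums is that because $\mathbf{k}$ is weakly decreasing, each superlevel set $\{j : k_j \geq i\}$ is a prefix $\{1,2,\dots,L_i\}$ and each sublevel set $\{j : k_j \leq -i\}$ is a suffix $\{N-M_i+1,\dots,N\}$, where
\[
L_i \overset{\textnormal{def}}{=} \#\{j : k_j \geq i\} = \sum_{j=i}^{\infty} l_j(\mathbf{k}),\qquad M_i \overset{\textnormal{def}}{=} \#\{j : k_j \leq -i\} = \sum_{j=-\infty}^{-i} l_j(\mathbf{k}).
\]

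For statement (1), I would write, for each $j$ with $k_j>0$, the ``layer-cake'' decomposition $k_j = \sum_{i=1}^{\infty} \mathbf{1}(k_j \geq i)$, substitute into the left-hand side, and swap the two sums to obtain
\[
\sum_{j=1}^{N}\mathbf{1}(k_j>0)k_j(2j-1) = \sum_{i=1}^{\infty}\sum_{j : k_j\geq i}(2j-1) = \sum_{i=1}^{\infty}\sum_{j=1}^{L_i}(2j-1)=\sum_{i=1}^{\infty}L_i^{2},
\]
which is the first claimed equality. The second form (with $N - \sum_{j\leq i-1}l_j$) is then just the identity $L_i = N - \sum_{j\leq i-1}l_j(\mathbf{k})$.

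For statement (2), the analogous layer-cake representation for $k_j<0$ is $-k_j = \sum_{i=1}^{\infty}\mathbf{1}(k_j \leq -i)$, so that the left-hand side becomes $-\sum_{i=1}^{\infty}\sum_{j : k_j \leq -i}(2j-2N-1)$. For fixed $i$ the inner index set is the suffix $\{N-M_i+1,\dots,N\}$; the substitution $j'=N-j+1$ turns the summand $(2j-2N-1)$ into $-(2j'-1)$, and running $j'$ from $1$ to $M_i$ yields $-M_i^{2}$. The outer minus sign flips this to $+M_i^{2}$, giving $\sum_{i=1}^{\infty}M_i^{2}$ as required. Statement (3) is the simplest: the identity $\sum_j \mathbf{1}(k_j>0)k_j = \sum_{i\geq 1}i\,l_i(\mathbf{k})$ is just the ``group parts by value'' reindexing, and inserting the $i=0$ term changes nothing since it contributes zero. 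I do not anticipate any real obstacle; the only mild subtlety is keeping track of signs and boundary cases in identity (2), which is handled cleanly by the substitution $j'=N-j+1$ that reflects the suffix into a prefix of the same length.
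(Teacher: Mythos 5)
Your proof is correct, and it takes a related but more self-contained route than the paper. The paper first restates item~1 as a partition identity, passes to the conjugate partition $\lambda'$ via $m_i(\lambda)=\lambda_i'-\lambda_{i+1}'$, and then verifies $\sum_i(2i-1)\lambda_i=\sum_i(\lambda_i')^2$ by citing Macdonald's formulas (1.5)--(1.6); item~2 is dismissed as ``analogous.'' Your layer-cake decomposition $k_j=\sum_{i\ge 1}\mathbf{1}(k_j\ge i)$, the observation that monotonicity of $\mathbf{k}$ makes each superlevel set a prefix of length $L_i=\sum_{j\ge i}l_j(\mathbf{k})$, and the elementary identity $\sum_{j=1}^{n}(2j-1)=n^2$ together reproduce the same conjugate-partition structure (indeed $L_i=\lambda'_i$), but without needing to invoke any external symmetric-function facts. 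What your version buys: it is completely elementary and, more importantly, it handles item~2 explicitly rather than by appeal to analogy -- the reflection $j'=N-j+1$ carries the suffix $\{N-M_i+1,\dots,N\}$ onto the prefix $\{1,\dots,M_i\}$ and flips the sign of $2j-2N-1$ exactly so that the same odd-number sum applies, which is the one genuinely asymmetric step and is worth spelling out as you did. The only thing to keep in mind, as in the paper, is that the formulas implicitly assume all $k_j$ are finite (here guaranteed since $\mathsf{M}_N^{(s)}$ is supported on $\textnormal{GL}(N,\mathbb{Q}_p)$), so both layer-cake series terminate.
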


\begin{proof}
Item 3 is essentially immediate by definition. We then only prove item 1 since item 2 follows by analogous arguments. Item 1 is equivalent to proving the following for a partition $\lambda=\left(1^{m_1}2^{m_2}\dots r^{m_r}\dots\right)$:
\begin{align*}
\sum_{i=1}^{\infty}(2i-1)\lambda_i=\sum_{i=1}^{\infty}\left(\sum_{j=i}^{\infty}m_i(\lambda)\right)^2,
\end{align*}
where by definition $m_i(\lambda)$ is the number of parts in $\lambda$ equal to $i$. On the other hand, if we consider the conjugate partition $\lambda'$, using the relation $m_i(\lambda)=\lambda_i'-\lambda_{i+1}'$ we are required to show that:
\begin{align*}
\sum_{i=1}^{\infty}(2i-1)\lambda_i=\sum_{i=1}^{\infty}\left(\lambda_i'\right)^2.
\end{align*}
Using the formulae (1.5) and (1.6) on page 3 of \cite{Macdonald} we can easily establish this as follows:
\begin{align*}
\sum_{i=1}^{\infty}(2i-1)\lambda_i=\sum_{i=1}^{\infty}\left[\left(\lambda_i'\right)^2-\lambda_i'\right]+\sum_{i=1}^{\infty}\lambda_i=\sum_{i=1}^{\infty}\left[\left(\lambda_i'\right)^2-\lambda_i'\right]+\sum_{i=1}^{\infty}\lambda_i'=\sum_{i=1}^{\infty}\left(\lambda_i'\right)^2.
\end{align*}
\end{proof}

The following formula for the law $\mathsf{m}_N^{(s)}$ of the singular numbers will be the starting point of our analysis.

\begin{prop}\label{SingValuesMeasure2}
Let $N\in\mathbb{N}$ and $s>-1$. Then,
\begin{align*}
&\mathsf{m}_N^{(s)}\left(k_1,\dots,k_N\right)=\mathsf{m}_N^{(s)}\left(\{l_i\left(\mathbf{k}\right) \}_{-\infty}^{\infty}\right)\\
&=\frac{\left(p^{-1-s};p^{-1}\right)^2_{N}\left(p^{-1};p^{-1}\right)_N^2}{\left(p^{-1-s};p^{-1}\right)_{2N}}p^{-\left[s\sum_{j=0}^{\infty}jl_j\left(\mathbf{k}\right)+\sum_{i=1}^{\infty}\left(\sum_{j=i}^{\infty}l_j\left(\mathbf{k}\right)\right)^2+\sum_{i=1}^{\infty}\left(\sum_{j=-\infty}^{-i}l_j\left(\mathbf{k}\right)\right)^2\right]}\frac{1}{\prod_{-\infty}^{\infty}\left(p^{-1};p^{-1}\right)_{l_i\left(\mathbf{k}\right)}},
\end{align*}
where $l_{i}=l_i\left(\mathbf{k}\right)=\#\big\{j:k_j=i \big\}$.
\end{prop}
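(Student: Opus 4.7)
The plan is to reduce Proposition \ref{SingValuesMeasure2} to a direct rewriting of the exponent of $p$ appearing in Proposition \ref{SingValuesMeasure1} via the combinatorial identities just proved in Lemma \ref{LemmaRewriting}. The prefactor $\frac{\left(p^{-1-s};p^{-1}\right)^2_{N}\left(p^{-1};p^{-1}\right)_N^2}{\left(p^{-1-s};p^{-1}\right)_{2N}}$ and the denominator $\prod_{-\infty}^{\infty}\left(p^{-1};p^{-1}\right)_{l_i\left(\mathbf{k}\right)}$ are already in the desired form, so the entire content is to show that
\begin{align*}
(s+2N)\sum_{k_j>0}k_j+\sum_{j=1}^{N}(2j-2N-1)k_j = s\sum_{j=0}^{\infty}jl_j(\mathbf{k})+\sum_{i=1}^{\infty}\Bigl(\sum_{j=i}^{\infty}l_j(\mathbf{k})\Bigr)^2+\sum_{i=1}^{\infty}\Bigl(\sum_{j=-\infty}^{-i}l_j(\mathbf{k})\Bigr)^2.
\end{align*}

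To do this I would first split the sum $\sum_{j=1}^{N}(2j-2N-1)k_j$ according to whether $k_j>0$ or $k_j\le 0$, and then group the positive-$k_j$ contributions together with the $(s+2N)\sum_{k_j>0}k_j$ term. On the positive-$k_j$ side the $2N$'s cancel cleanly and one is left with $s\sum_{k_j>0}k_j+\sum_{j=1}^{N}\mathbf{1}(k_j>0)(2j-1)k_j$, while the negative-$k_j$ side simply keeps the coefficient $(2j-2N-1)$. At this point items 1, 2 and 3 of Lemma \ref{LemmaRewriting} match these three pieces one by one: item 3 identifies $\sum_{k_j>0}k_j$ with $\sum_{j\ge 0}jl_j(\mathbf{k})$ (using that the $j=0$ term vanishes), item 1 rewrites $\sum_j\mathbf{1}(k_j>0)(2j-1)k_j$ as $\sum_{i\ge 1}\bigl(\sum_{j\ge i}l_j(\mathbf{k})\bigr)^2$, and item 2 rewrites the remaining $k_j\le 0$ contribution as $\sum_{i\ge 1}\bigl(\sum_{j\le -i}l_j(\mathbf{k})\bigr)^2$. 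Assembling the three pieces yields exactly the exponent on the right-hand side of Proposition \ref{SingValuesMeasure2}.

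There is really no substantial obstacle here; the proposition is a purely algebraic rearrangement once Lemma \ref{LemmaRewriting} is in hand. The only point requiring minor care is the $k_j=0$ convention: one must check that both expressions are insensitive to whether $j$ with $k_j=0$ is counted among the ``positive'' or ``non-positive'' indices, which is clear because such terms contribute $0$ to every quantity in sight. With that observation the identification above is immediate, and substituting it back into the formula of Proposition \ref{SingValuesMeasure1} completes the proof.
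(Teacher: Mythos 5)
Your proposal is correct and follows exactly the route the paper takes: Proposition \ref{SingValuesMeasure2} is obtained from Proposition \ref{SingValuesMeasure1} by rewriting the exponent of $p$ using the three items of Lemma \ref{LemmaRewriting}, and your splitting of $\sum_{j=1}^{N}(2j-2N-1)k_j$ according to the sign of $k_j$ followed by the cancellation of the $2N$ terms on the positive side is precisely the algebra the paper summarizes as ``immediate rewriting.'' The remark about the $k_j=0$ convention is a harmless extra check that does not affect anything, as you note.
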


\begin{proof}
Immediate rewriting of Proposition \ref{SingValuesMeasure1} by means of Lemma \ref{LemmaRewriting}.
\end{proof}

We now arrive at the key ingredient for establishing Theorem \ref{MainResult}, which is a Markov chain representation of the law $\mathsf{m}_N^{(s)}$ of the singular numbers. The proof is a simple elementary verification and thus not particularly illuminating. What is really non-trivial here is discovering the statement of the result. We arrived at it by specifically looking for such a Markov chain structure, partly motivated by the simpler case\footnote{\label{Evansfootnote}Evans in \cite{Evans} studies the probability measure $d\mathsf{vol}$ on $\textnormal{Mat}\left(N,\mathbb{Z}_p\right)$, in particular in this case all the singular numbers $(k_1,\dots,k_N)$ are non-positive. Note that, this probability measure coincides with $\mathsf{Law}\left(\Pi_N^{\infty}\left(\mathbf{A}_{\mathbf{0}}\right)\right)=\left(\Pi_N^{\infty}\right)_*\mu_{\mathbf{0}}$, where $\mathbf{0}=(0,0,0,\dots)\in \Delta$ and $\mathbf{A}_{\mathbf{k}}$ and $\mu_{\mathbf{k}}$ were introduced in Definition \ref{DefinitionErgodicMeasure}. From Proposition \ref{PropositionApproximation} we readily get that, since $\mu_{\mathbf{0}}$ is an ergodic measure, $\lim_{N\ge j}k_j^{(N)}\left(\mathbf{A}_{\mathbf{0}}\right)=0$, for all $j\ge 1$ and we observe that these limits can also be established using the Markov chain representation obtained by Evans in \cite{Evans}. Finally, it is worth mentioning that the proof by Evans in \cite{Evans} of an underlying Markov chain structure in $\left(\mathsf{Sing}_N\Pi_N^{\infty}\right)_*\mu_{\mathbf{0}}$ uses in a very essential way the fact that the entries of $\mathbf{A}_{\mathbf{0}}$ are i.i.d. and thus we do not see how it can be adapted to our setting.} investigated by Evans in \cite{Evans}, but mainly since the tools previously used for this class of problems \cite{BorodinOlshanskiErgodic}, \cite{BorodinOlshanskiHarmonic}, \cite{GorinOlshanski}, \cite{Cuenca}, \cite{AssiotisInverseWishart}, such as determinantal point processes, do not appear to be applicable here. Although a conceptual explanation of why such a Markov chain structure exists behind this problem is currently lacking, it would be very interesting to have one and we are investigating it.

Recall that for $\mathbf{k}\in \Delta_N$ we have $\sum_{-\infty}^{\infty}l_i(\mathbf{k})=N$ and moreover the collection of numbers $\big\{l_i(\mathbf{k})\big\}_{-\infty}^\infty$ is uniquely determined by $\bigg\{\sum_{j=i}^{\infty}l_j(\mathbf{k})\bigg\}_{i=0}^\infty\cup\bigg\{N-\sum_{j=-i}^{\infty}l_j(\mathbf{k})\bigg\}_{i=1}^\infty$ and vice versa.

\begin{thm}\label{MarkovChainRep1}
Let $N\in\mathbb{N}$ and $s>-1$. Define $\tilde{\pi}_N^{(s)}$ by the formula:
\begin{align*}
\tilde{\pi}_N^{(s)}(x)=\frac{\left(p^{-1-s};p^{-1}\right)_N^2\left(p^{-1};p^{-1}\right)_N^2}{\left(p^{-1-s};p^{-1}\right)_{2N}\left(p^{-1};p^{-1}\right)_x\left(p^{-1-s};p^{-1}\right)_x\left(p^{-1};p^{-1}\right)^2_{N-x}}p^{-(N-x)^2}, \ 0\le x\le N.
\end{align*}
Then, $\tilde{\pi}_N^{(s)}$ is a probability measure on $\llbracket0,N \rrbracket$. Moreover, we can sample a $\mathbf{k}\in \Delta_N$ having $\mathsf{Law}(\mathbf{k})=\mathsf{m}_N^{(s)}$, or equivalently the corresponding collection of numbers $\{l_i(\mathbf{k}) \}_{-\infty}^\infty$, as follows, see Figure \ref{Figure2} for an illustration:
\begin{enumerate}
\item First, sample $\sum_{0}^{\infty}l_i(\mathbf{k})$ according to $\tilde{\pi}_N^{(s)}$.
\item Starting from $\sum_{0}^{\infty}l_i(\mathbf{k})$ we sample $\sum_{1}^{\infty}l_i(\mathbf{k}),\sum_{2}^{\infty}l_i(\mathbf{k}), \sum_{3}^{\infty}l_i(\mathbf{k}),\dots$ by running a Markov chain with transition kernel $\mathsf{P}^{(s)}$. We note that the Markov kernel $\mathsf{P}^{(s)}$, which was defined in (\ref{DefMarkovKernel}), is the same kernel appearing in Theorem \ref{MainResult} and in particular is independent of $N$.
\item Starting from $N-\sum_{0}^{\infty}l_i(\mathbf{k})$ we sample $N-\sum_{-1}^{\infty}l_i(\mathbf{k}),N-\sum_{-2}^{\infty}l_i(\mathbf{k}), N-\sum_{-3}^{\infty}l_i(\mathbf{k}),\dots$ by running a Markov chain with transition kernel $\mathsf{P}=\mathsf{P}^{(0)}$.
\end{enumerate}
\end{thm}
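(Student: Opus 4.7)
The plan is to verify directly that the joint distribution on $\{l_i(\mathbf{k})\}_{i\in\mathbb{Z}}$ produced by items 1--3 of the sampling scheme equals the formula for $\mathsf{m}_N^{(s)}$ provided by Proposition \ref{SingValuesMeasure2}. Since $\mathsf{M}_N^{(s)}$ is supported on $\textnormal{GL}(N,\mathbb{Q}_p)$, I may restrict attention to configurations with $l_{-\infty}(\mathbf{k})=0$, so that the partial sums $y_i := \sum_{j\ge i}l_j(\mathbf{k})$ (for $i\ge 0$) and $z_i := N-\sum_{j\ge -i}l_j(\mathbf{k})=\sum_{j\le -i-1}l_j(\mathbf{k})$ (for $i\ge 0$) are eventually zero. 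The increments are $l_i=y_i-y_{i+1}$ for $i\ge 0$ and $l_{-i-1}=z_i-z_{i+1}$ for $i\ge 0$, while $z_0=N-y_0$ couples the two chains.

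I would then multiply out
\[
\tilde{\pi}_N^{(s)}(y_0)\,\prod_{i\ge 0}\mathsf{P}^{(s)}(y_i,y_{i+1})\,\prod_{j\ge 0}\mathsf{P}(z_j,z_{j+1})
\]
and exploit the transparent telescoping built into $\mathsf{P}^{(s)}$. The forward product contributes $(p^{-1};p^{-1})_{y_0}(p^{-1-s};p^{-1})_{y_0}\prod_{i\ge 0}(p^{-1};p^{-1})_{l_i}^{-1}$ together with a factor $p^{-\sum_{i\ge 1}(y_i^2+sy_i)}$; the backward product, which is $\mathsf{P}^{(s)}$ at $s=0$, contributes $(p^{-1};p^{-1})_{z_0}^{2}\prod_{i\ge 0}(p^{-1};p^{-1})_{l_{-i-1}}^{-1}$ and a factor $p^{-\sum_{j\ge 1}z_j^2}$. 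These Pochhammer factors cancel neatly against the denominator of $\tilde{\pi}_N^{(s)}(y_0)$ (using $z_0=N-y_0$), leaving only the prefactor $(p^{-1-s};p^{-1})_N^{2}(p^{-1};p^{-1})_N^{2}/(p^{-1-s};p^{-1})_{2N}$ and the product $\prod_{i\in\mathbb{Z}}(p^{-1};p^{-1})_{l_i(\mathbf{k})}^{-1}$.

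It then remains to identify the aggregate exponent of $p$ with that in Proposition \ref{SingValuesMeasure2}. The relations $\sum_{i\ge 1}y_i=\sum_{j\ge 0}jl_j(\mathbf{k})$, $\sum_{i\ge 1}y_i^2=\sum_{i\ge 1}\bigl(\sum_{j\ge i}l_j(\mathbf{k})\bigr)^2$, and most importantly $(N-y_0)^2+\sum_{j\ge 1}z_j^2=\sum_{j\ge 0}z_j^2=\sum_{i\ge 1}\bigl(\sum_{j\le -i}l_j(\mathbf{k})\bigr)^2$ recover exactly the three sums bracketed in Proposition \ref{SingValuesMeasure2}. The role of the ``seed'' factor $p^{-(N-y_0)^2}$ baked into $\tilde{\pi}_N^{(s)}$ is precisely to supply the missing $i=1$ term of the third sum, which the backward chain kernels do not generate on their own.

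Finally, having matched the two distributions, the assertion that $\tilde{\pi}_N^{(s)}$ is a probability measure on $\llbracket 0,N\rrbracket$ is automatic: summing the identity over $\mathbf{k}$, the right-hand side yields $1$ because $\mathsf{m}_N^{(s)}$ is a probability measure, while the left-hand side reduces to $\sum_{x=0}^N\tilde{\pi}_N^{(s)}(x)$ since $\mathsf{P}^{(s)}$ and $\mathsf{P}$ are already known to be Markov kernels by \cite{FulmanRogersRamanujan2}. No deep technical obstacle is present; the verification boils down to bookkeeping with telescoping $q$-Pochhammer symbols and reindexing sums. The real substance of the theorem, and hence its principal difficulty, lies upstream of this argument, in guessing the particular splitting of the exponent in Proposition \ref{SingValuesMeasure2} into a forward-plus-backward pair of chains sharing only the single variable $y_0$; once this splitting is anticipated, the calculation is essentially forced.
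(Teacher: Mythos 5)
Your proof is correct and follows the same direct-verification strategy as the paper: multiply $\tilde{\pi}_N^{(s)}(y_0)$ by the two telescoping products of kernel factors, match the aggregate against the formula of Proposition \ref{SingValuesMeasure2}, and then read off $\sum_x\tilde{\pi}_N^{(s)}(x)=1$ by summing both sides. Your bookkeeping of the surviving $q$-Pochhammer factors (in particular recording the backward product's contribution as $(p^{-1};p^{-1})_{z_0}^2\prod_{i\ge 0}(p^{-1};p^{-1})_{l_{-i-1}}^{-1}$) is in fact a shade more precise than the paper's intermediate displays, though of course both arrive at the same final identity.
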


\begin{proof}
Let $\mathbf{k}\in \Delta_N$ be arbitrary. We first compute:
\begin{align*}
\prod_{i=0}^{\infty}\mathsf{P}^{(s)}\left(\sum_{j=i}^{\infty}l_j\left(\mathbf{k}\right),\sum_{j=i+1}^{\infty}l_j\left(\mathbf{k}\right)\right)&=p^{-\sum_{i=1}^{\infty}\left(\sum_{j=i}^{\infty}l_j(\mathbf{k})\right)^2-s\sum_{j=1}^{\infty}jl_j(\mathbf{k})}\times\\&\times\left(p^{-1};p^{-1}\right)_{\sum_{j=0}^{\infty}l_j(\mathbf{k})}\left(p^{-1-s};p^{-1}\right)_{\sum_{j=0}^{\infty}l_j(\mathbf{k})}\prod_{i=1}^{\infty}\frac{1}{\left(p^{-1};p^{-1}\right)_{l_i(\mathbf{k})}},\\
\prod_{i=0}^{\infty}\mathsf{P}\left(N-\sum_{j=-i}^{\infty}l_j\left(\mathbf{k}\right),N-\sum_{j=-i-1}^{\infty}l_j\left(\mathbf{k}\right)\right)&=p^{-\sum_{i=1}^{\infty}\left(	N-\sum_{j=-i}^{\infty}l_j(\mathbf{k})\right)^2}\left(p^{-1};p^{-1}\right)_{N-\sum_{j=0}^{\infty}l_j(\mathbf{k})}\times\\&\times\prod_{i=0}^{\infty}\frac{1}{\left(p^{-1};p^{-1}\right)_{l_{-i}(\mathbf{k})}}\\
&=p^{-\sum_{i=1}^{\infty}\left(	\sum_{-\infty}^{-i-1}l_j(\mathbf{k})\right)^2}\left(p^{-1};p^{-1}\right)_{N-\sum_{j=0}^{\infty}l_j(\mathbf{k})}\prod_{i=0}^{\infty}\frac{1}{\left(p^{-1};p^{-1}\right)_{l_{-i}(\mathbf{k})}}.
\end{align*}
Thus, we have:
\begin{align}\label{MarkovRepDisplay1}
\tilde{\pi}_N^{(s)}\left(\sum_{i=0}^{\infty}l_i(\mathbf{k})\right)\prod_{i=0}^{\infty}\mathsf{P}^{(s)}\left(\sum_{j=i}^{\infty}l_j\left(\mathbf{k}\right),\sum_{j=i+1}^{\infty}l_j\left(\mathbf{k}\right)\right)\prod_{i=0}^{\infty}\mathsf{P}\left(N-\sum_{j=-i}^{\infty}l_j\left(\mathbf{k}\right),N-\sum_{j=-i-1}^{\infty}l_j\left(\mathbf{k}\right)\right)\nonumber\\
=\mathsf{m}_N^{(s)}\left(\bigg\{\sum_{j=i}^{\infty}l_j(\mathbf{k})\bigg\}_{i=0}^\infty\cup\bigg\{N-\sum_{j=-i}^{\infty}l_j(\mathbf{k})\bigg\}_{i=1}^\infty\right)=\mathsf{m}_N^{(s)}\left(\big\{l_i(\mathbf{k})\big\}_{-\infty}^\infty\right).
\end{align}
Since $\mathsf{m}_N^{(s)}$ is a probability measure, $\mathsf{P}^{(s)}$ is a Markov kernel and $\mathbf{k}\in \Delta_N$ was arbitrary we obtain that $\sum_{x=0}^N\tilde{\pi}_N^{(s)}(x)=1$, which gives (since it is clearly positive) that $\tilde{\pi}_N^{(s)}$ is a probability measure on $\llbracket0,N \rrbracket$. Thus, (\ref{MarkovRepDisplay1}) gives us the statement of the theorem.
\end{proof}

\begin{figure}
\captionsetup{singlelinecheck = false, justification=justified}
\begin{tikzpicture}
      \draw[->,thick] (0,0) to [out=45,in=135] (0.8,0);
               
              \node[above] at (0.4,0.2) {$\tilde{\pi}_N^{(s)}$};
              \node[] at (1.6,0) {$\sum_{0}^{\infty}l_i(\mathbf{k})$};
              
               \draw[->,dashed,thick] (2.4,0) to (3,1);
                             \draw[->,dashed,thick] (2.4,0) to (3,-1);

              \node[] at (3.8,1) {$\sum_{0}^{\infty}l_i(\mathbf{k})$};
              \node[] at (4.2,-1) {$N-\sum_{0}^{\infty}l_i(\mathbf{k})$};
              
              \draw[->,thick] (4.6,1) to [out=45,in=135] (5.4,1);
             \node[above] at (5,1.2) {$\mathsf{P}^{(s)}$};   
             
                           \draw[->,thick] (5.3,-1) to [out=45,in=135] (6.1,-1);    
                          \node[above] at (5.7,-0.8) {$\mathsf{P}$};           
                          
            \node[] at (7.3,-1) {$N-\sum_{-1}^{\infty}l_i(\mathbf{k})$};                                    
                                                  
              \node[] at (6.3,1) {$\sum_{1}^{\infty}l_i(\mathbf{k})$};
              
                            \draw[->,thick] (7.3,1) to [out=45,in=135] (8.1,1);
                                         \node[above] at (7.7,1.2) {$\mathsf{P}^{(s)}$};             
                                         
                                \draw[->,thick] (8.5,-1) to [out=45,in=135] (9.3,-1);    
                                     \node[above] at (8.9,-0.8) {$\mathsf{P}$};

             \node[] at (9,1) {$\sum_{2}^{\infty}l_i(\mathbf{k})$};
               \node[] at (10.5,-1) {$N-\sum_{-2}^{\infty}l_i(\mathbf{k})$};

    \draw[->,thick] (11.7,-1) to [out=45,in=135] (12.5,-1);
    \node[above] at (12.1,-0.8) {$\mathsf{P}$};                                  
    
                                         \draw[->,thick] (10,1) to [out=45,in=135] (10.8,1);
                                        \node[above] at (10.4,1.2) {$\mathsf{P}^{(s)}$};                                            
              \node[] at (11.4,1) {$\dots$};
                            \node[] at (13,-1) {$\dots$};
 \end{tikzpicture}
 \caption{An illustration of the Markov chain from Theorem \ref{MarkovChainRep1}. Here, the dashed arrows represent a deterministic move while the solid ones a random move (according to the corresponding probability measure).}\label{Figure2}
 \end{figure}
We also have the following alternative Markov chain representation of $\mathsf{m}_N^{(s)}$ which will be used in the sequel as well. As before, observe that for $\mathbf{k}\in \Delta_N$ the collection of numbers $\big\{l_i(\mathbf{k})\big\}_{-\infty}^\infty$ is uniquely determined by $\bigg\{N-\sum_{-\infty}^{i}l_j(\mathbf{k})\bigg\}_{i=1}^\infty\cup \bigg\{\sum_{-\infty}^{-i}l_j(\mathbf{k})\bigg\}_{i=0}^\infty$ and vice versa.

\begin{thm}\label{MarkovChainRep2}
Let $N\in\mathbb{N}$ and $s>-1$. Define $\pi_N^{(s)}$ by the formula:
\begin{align*}
\pi_N^{(s)}(x)=\frac{\left(p^{-1-s};p^{-1}\right)_N^2\left(p^{-1};p^{-1}\right)_N^2}{\left(p^{-1-s};p^{-1}\right)_{2N}\left(p^{-1};p^{-1}\right)_x^2\left(p^{-1};p^{-1}\right)_{N-x}\left(p^{-1-s};p^{-1}\right)_{N-x}}p^{-(N-x)^2-s(N-x)}, \ 0\le x\le N.
\end{align*}
Then, $\pi_N^{(s)}$ is a probability measure on $\llbracket0,N \rrbracket$. Moreover, we can sample a $\mathbf{k}\in \Delta_N$ having $\mathsf{Law}(\mathbf{k})=\mathsf{m}_N^{(s)}$, or equivalently the corresponding collection of numbers $\{l_i(\mathbf{k}) \}_{-\infty}^\infty$, as follows, see Figure \ref{Figure3} for an illustration:
\begin{enumerate}
\item First, sample $\sum_{-\infty}^{0}l_i(\mathbf{k})$ according to $\pi_N^{(s)}$.
\item Starting from $N-\sum_{-\infty}^{0}l_i(\mathbf{k})$ we sample $N-\sum_{-\infty}^{1}l_i(\mathbf{k}),N-\sum_{-\infty}^{2}l_i(\mathbf{k}), N-\sum_{-\infty}^{3}l_i(\mathbf{k}),\dots$ by running a Markov chain with transition kernel $\mathsf{P}^{(s)}$.
\item Starting from $\sum_{-\infty}^{0}l_i(\mathbf{k})$ we sample $\sum_{-\infty}^{-1}l_i(\mathbf{k}), \sum_{-\infty}^{-2}l_i(\mathbf{k}),  \sum_{-\infty}^{-3}l_i(\mathbf{k}),\dots$ by running a Markov chain with transition kernel $\mathsf{P}=\mathsf{P}^{(0)}$.
\end{enumerate}
\end{thm}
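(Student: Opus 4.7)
The plan is to mirror the proof of Theorem \ref{MarkovChainRep1} directly. Fix an arbitrary $\mathbf{k}\in\Delta_N$ and set $x=\sum_{j\leq 0}l_j(\mathbf{k})$, so that $N-x=\sum_{j\geq 1}l_j(\mathbf{k})$. The whole task reduces to verifying the identity
\begin{align*}
\pi_N^{(s)}(x)\prod_{i=0}^\infty \mathsf{P}^{(s)}\!\left(N-\sum_{j\leq i} l_j(\mathbf{k}),\,N-\sum_{j\leq i+1}l_j(\mathbf{k})\right)\prod_{i=0}^\infty \mathsf{P}\!\left(\sum_{j\leq -i}l_j(\mathbf{k}),\,\sum_{j\leq -i-1}l_j(\mathbf{k})\right)=\mathsf{m}_N^{(s)}(\mathbf{k}),
\end{align*}
where the right-hand side is expressed as in Proposition \ref{SingValuesMeasure2}. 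Once this identity holds for every $\mathbf{k}\in\Delta_N$, summing over $\mathbf{k}$ and using that $\mathsf{P}^{(s)}$ and $\mathsf{P}$ are Markov kernels while $\mathsf{m}_N^{(s)}$ is a probability measure forces $\sum_{x=0}^N \pi_N^{(s)}(x)=1$; combined with the manifest positivity of the explicit formula this yields that $\pi_N^{(s)}$ is a probability measure on $\llbracket 0,N\rrbracket$, and the displayed identity is exactly the claimed Markov chain representation.

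I would compute the two infinite products by direct substitution into (\ref{DefMarkovKernel}), using the relation $N-\sum_{j\leq i}l_j=\sum_{j\geq i+1}l_j$ on the forward side. Both products simplify by telescoping: the ratios $(p^{-1};p^{-1})_{x_1}/(p^{-1};p^{-1})_{x_2}$ and $(p^{-1-s};p^{-1})_{x_1}/(p^{-1-s};p^{-1})_{x_2}$ along each chain collapse to boundary terms involving only $(p^{-1};p^{-1})_{N-x}$, $(p^{-1-s};p^{-1})_{N-x}$ on the forward side and $(p^{-1};p^{-1})_x^2$ on the backward side, while the $1/(p^{-1};p^{-1})_{x_1-x_2}$ factors accumulate into $\prod_{i\in\mathbb{Z}}(p^{-1};p^{-1})_{l_i(\mathbf{k})}^{-1}$. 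The Pochhammer denominators of $\pi_N^{(s)}(x)$ are designed precisely to cancel these telescoped boundary factors, leaving the prefactor $(p^{-1-s};p^{-1})_N^2(p^{-1};p^{-1})_N^2/(p^{-1-s};p^{-1})_{2N}$ that appears in Proposition \ref{SingValuesMeasure2}.

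The only non-routine step is bookkeeping the $p$-exponent, which is also the sole point where this statement differs from Theorem \ref{MarkovChainRep1}. The backward chain contributes $-\sum_{i\geq 1}(\sum_{j\leq -i}l_j)^2$, matching the third summand in Proposition \ref{SingValuesMeasure2} directly. The forward chain contributes only $-\sum_{i\geq 2}(\sum_{j\geq i}l_j)^2 - s\sum_{j\geq 2}(j-1)l_j$, since $\prod_{i=0}^\infty \mathsf{P}^{(s)}$ indexes the second argument as $\sum_{j\geq i+1}l_j$, producing $i\geq 2$ terms only; the factor $-(N-x)^2=-(\sum_{j\geq 1}l_j)^2$ carried by $\pi_N^{(s)}$ supplies the missing $i=1$ term. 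Finally, the elementary identity $-s(N-x)-s\sum_{j\geq 2}(j-1)l_j=-s\sum_{j\geq 1}jl_j=-s\sum_{j\geq 0}jl_j$ shows that the extra $p^{-s(N-x)}$ factor in $\pi_N^{(s)}$, which is the precise feature distinguishing it from $\tilde\pi_N^{(s)}$, exactly supplies the missing $s\cdot l_1$ contribution to the positive-side $s$-coupling. I anticipate no genuine obstacle beyond this $q$-Pochhammer and exponent bookkeeping.
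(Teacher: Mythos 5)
Your proposal is correct and follows essentially the same route as the paper: verify the pointwise identity $\pi_N^{(s)}(x)\prod_i\mathsf{P}^{(s)}(\cdot,\cdot)\prod_i\mathsf{P}(\cdot,\cdot)=\mathsf{m}_N^{(s)}(\mathbf{k})$ by telescoping the $q$-Pochhammer ratios and tracking the $p$-exponents, then sum over $\mathbf{k}\in\Delta_N$ to get normalization of $\pi_N^{(s)}$ for free. Your exponent bookkeeping, including the observation that $-(N-x)^2$ supplies the missing $i=1$ term and that $-s(N-x)-s\sum_{j\geq 1}(j-1)l_j=-s\sum_{j\geq 1}jl_j$, matches the paper's computation exactly.
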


\begin{proof}
The verification is analogous to the proof of Theorem \ref{MarkovChainRep1}. Let $\mathbf{k}\in \Delta_N$ be arbitrary and compute:
\begin{align*}
\prod_{i=0}^{\infty}\mathsf{P}^{(s)}\left(N-\sum_{-\infty}^{i}l_j(\mathbf{k}),N-\sum_{-\infty}^{i+1}l_j(\mathbf{k})\right)&=p^{-\sum_{i=2}^{\infty}\left(\sum_{j=i}^{\infty}l_j(\mathbf{k})\right)^2-s\sum_{j=1}^{\infty}(j-1)l_j(\mathbf{k})}\left(p^{-1};p^{-1}\right)_{N-\sum_{-\infty}^{0}l_j(\mathbf{k})}\times\\ &\times\left(p^{-1-s};p^{-1}\right)_{N-\sum_{-\infty}^{0}l_j(\mathbf{k})} \prod_{i=1}^{\infty}\frac{1}{\left(p^{-1};p^{-1}\right)_{l_{i}(\mathbf{k})}},  \\
\prod_{i=0}^{\infty}\mathsf{P}\left(\sum_{-\infty}^{-i}l_j(\mathbf{k}),\sum_{-\infty}^{-i-1}l_j(\mathbf{k})\right)&=p^{-\sum_{i=1}^{\infty}\left(\sum_{-\infty}^{-i}l_j(\mathbf{k})\right)^2}\left(p^{-1};p^{-1}\right)_{\sum_{-\infty}^{0}l_j(\mathbf{k})}^2\prod_{i=0}^{\infty}\frac{1}{\left(p^{-1};p^{-1}\right)_{l_{-i}(\mathbf{k})}}.
\end{align*}
Thus, we have:
\begin{align}\label{MarkovRepDisplay2}
\pi_N^{(s)}\left(\sum_{-\infty}^{0}l_i(\mathbf{k})\right)\prod_{i=0}^{\infty}\mathsf{P}^{(s)}\left(N-\sum_{-\infty}^{i}l_j(\mathbf{k}),N-\sum_{-\infty}^{i+1}l_j(\mathbf{k})\right)\prod_{i=0}^{\infty}\mathsf{P}\left(\sum_{-\infty}^{-i}l_j(\mathbf{k}),\sum_{-\infty}^{-i-1}l_j(\mathbf{k})\right)\nonumber\\
=\mathsf{m}_N^{(s)}\left(\bigg\{N-\sum_{-\infty}^{i}l_j(\mathbf{k})\bigg\}_{i=1}^\infty\cup \bigg\{\sum_{-\infty}^{-i}l_j(\mathbf{k})\bigg\}_{i=0}^\infty\right)=\mathsf{m}_N^{(s)}\left(\big\{l_i(\mathbf{k})\big\}_{-\infty}^\infty\right).
\end{align}
Since $\mathsf{m}_N^{(s)}$ is a probability measure, $\mathsf{P}^{(s)}$ is a Markov kernel and $\mathbf{k}\in \Delta_N$ was arbitrary we obtain that $\pi_N^{(s)}$ is a probability measure on $\llbracket0,N \rrbracket$. Thus, (\ref{MarkovRepDisplay2}) gives us the statement of the theorem.
\end{proof}

\begin{figure}
\captionsetup{singlelinecheck = false, justification=justified}
\begin{tikzpicture}
\draw[->,thick] (0,0) to [out=45,in=135] (0.8,0);
               
              \node[above] at (0.4,0.2) {$\pi_N^{(s)}$};
              \node[] at (1.6,0) {$\sum_{-\infty}^{0}l_i(\mathbf{k})$};
              
               \draw[->,dashed,thick] (2.4,0) to (3,1);
                             \draw[->,dashed,thick] (2.4,0) to (3,-1);

              \node[] at (3.8,-1) {$\sum_{-\infty}^{0}l_i(\mathbf{k})$};
              \node[] at (4.2,1) {$N-\sum_{-\infty}^{0}l_i(\mathbf{k})$};
              
              \draw[->,thick] (4.6,-1) to [out=45,in=135] (5.4,-1);
             \node[above] at (5,-0.8) {$\mathsf{P}$};   
             
                           \draw[->,thick] (5.3,1) to [out=45,in=135] (6.1,1);    
                          \node[above] at (5.7,1.2) {$\mathsf{P}^{(s)}$};           
                          
            \node[] at (7.3,1) {$N-\sum_{-\infty}^{1}l_i(\mathbf{k})$};                                    
                                                  
              \node[] at (6.3,-1) {$\sum_{-\infty}^{-1}l_i(\mathbf{k})$};
              
                            \draw[->,thick] (7.3,-1) to [out=45,in=135] (8.1,-1);
                                         \node[above] at (7.7,-0.8) {$\mathsf{P}$};             
                                         
                                \draw[->,thick] (8.5,1) to [out=45,in=135] (9.3,1);    
                                     \node[above] at (8.9,1.2) {$\mathsf{P}^{(s)}$};

             \node[] at (9,-1) {$\sum_{-\infty}^{-2}l_i(\mathbf{k})$};
               \node[] at (10.5,1) {$N-\sum_{-\infty}^{2}l_i(\mathbf{k})$};

    \draw[->,thick] (11.7,1) to [out=45,in=135] (12.5,1);
    \node[above] at (12.1,1.2) {$\mathsf{P}^{{(s)}}$};                                  
    
                                         \draw[->,thick] (10,-1) to [out=45,in=135] (10.8,-1);
                                        \node[above] at (10.4,-0.8) {$\mathsf{P}$};                                            
              \node[] at (11.4,-1) {$\dots$};
                            \node[] at (13,1) {$\dots$};
 \end{tikzpicture}
 \caption{An illustration of the Markov chain from Theorem \ref{MarkovChainRep2}. Here, the dashed arrows represent a deterministic move while the solid ones a random move (according to the corresponding probability measure).}\label{Figure3}
 \end{figure}
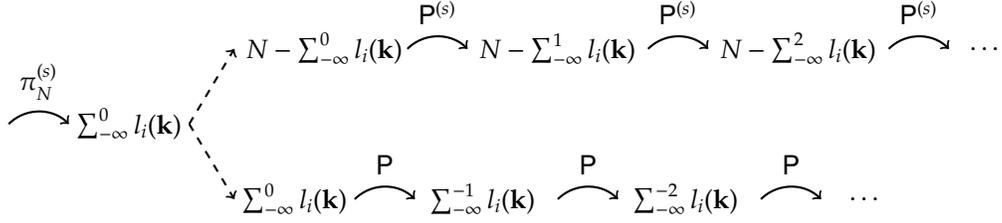

\begin{rmk}
Both identities $\sum_{x=0}^N\tilde{\pi}_N^{(s)}(x)=1$ and $\sum_{x=0}^N\pi_N^{(s)}(x)=1$ can in fact be obtained from the $q$-analogue of the Chu-Vandermonde summation formula. More precisely, using the identity, see display (1.11.5) in \cite{HypergeometricOrthogonalPolynomials}:
\begin{align*}
{}_2 \phi_1\left(\begin{matrix}q^{-N}, b\\c\end{matrix};q,q\right)
=\frac{\left(b^{-1}c;q\right)_N}{\left(c;q\right)_N}b^N,
\end{align*}
we obtain $\sum_{x=0}^N\tilde{\pi}_N^{(s)}(x)=1$ by taking $q=p^{-1}, b=q^{-N}, c=q^{1+s}$, while to get $\sum_{x=0}^N\pi_N^{(s)}(x)=1$ we take $q=p^{-1}, b=q^{-N-s}, c=q$. Nevertheless, we expect that the non-computational proof we presented earlier is new.
\end{rmk}

The following lemma is an easy consequence of Theorem \ref{MarkovChainRep2}.
\begin{lem}\label{PosNoConvLem}
Let $s>-1$. Suppose $\mathbf{Z}\in \textnormal{Mat}_{\textnormal{reg}}\left(\mathbb{N},\mathbb{Q}_p\right)$ with $\mathsf{Law}\left(\mathbf{Z}\right)=\mathsf{M}^{(s)}$. Then,
\begin{align*}
\sum_{i=1}^{\infty}l_i^{(N)}\left(\mathbf{Z}\right)=N-\sum_{-\infty}^{0}l_i^{(N)}\left(\mathbf{Z}\right)\overset{\textnormal{d}}{\longrightarrow} \mathcal{Z}, \ \textnormal{ with } \mathsf{Law}\left(\mathcal{Z}\right)=\pi^{(s)},
\end{align*}
where $\pi^{(s)}$ was given in Definition \ref{MainDefinition}.
\end{lem}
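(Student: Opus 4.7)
The plan is to reduce the claim to a direct pointwise computation using the second Markov chain representation (Theorem \ref{MarkovChainRep2}). Since $\Pi_N^{\infty}(\mathbf{Z})$ has law $\mathsf{M}_N^{(s)}$ by Proposition \ref{ConstructionInfiniteMeasure}, the singular numbers $\mathbf{k}^{(N)}(\mathbf{Z})$ have law $\mathsf{m}_N^{(s)}$. By Theorem \ref{MarkovChainRep2}, the first coordinate of the Markov chain, namely $\sum_{-\infty}^{0}l_i^{(N)}(\mathbf{Z})$, is distributed according to $\pi_N^{(s)}$. Consequently, writing $\mathcal{Z}_N\overset{\textnormal{def}}{=}\sum_{i=1}^{\infty}l_i^{(N)}(\mathbf{Z})=N-\sum_{-\infty}^{0}l_i^{(N)}(\mathbf{Z})$, the distribution of $\mathcal{Z}_N$ is given by
\begin{align*}
\mathbb{P}\left(\mathcal{Z}_N=y\right)=\pi_N^{(s)}(N-y)=\frac{\left(p^{-1-s};p^{-1}\right)_N^2\left(p^{-1};p^{-1}\right)_N^2 \, p^{-y^2-sy}}{\left(p^{-1-s};p^{-1}\right)_{2N}\left(p^{-1};p^{-1}\right)_{N-y}^2\left(p^{-1};p^{-1}\right)_{y}\left(p^{-1-s};p^{-1}\right)_{y}}
\end{align*}
for $0\le y\le N$.

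Next, I would pass to the limit $N\to\infty$ pointwise in $y\in \mathbb{Z}_+$. Using the elementary facts that $\left(p^{-1-s};p^{-1}\right)_M \to \left(p^{-1-s};p^{-1}\right)_\infty$ and $\left(p^{-1};p^{-1}\right)_M \to \left(p^{-1};p^{-1}\right)_\infty$ as $M\to\infty$ (valid since $s>-1$ guarantees $|p^{-1-s}|<1$), each of the Pochhammer symbols indexed by $N$, $2N$, or $N-y$ converges to the corresponding infinite Pochhammer symbol. Collecting the factors gives
\begin{align*}
\lim_{N\to\infty}\pi_N^{(s)}(N-y)=\frac{\left(p^{-1-s};p^{-1}\right)_\infty^2\left(p^{-1};p^{-1}\right)_\infty^2 \, p^{-y^2-sy}}{\left(p^{-1-s};p^{-1}\right)_\infty\left(p^{-1};p^{-1}\right)_\infty^2\left(p^{-1};p^{-1}\right)_y\left(p^{-1-s};p^{-1}\right)_y}=\pi^{(s)}(y).
\end{align*}

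Finally, since $\pi^{(s)}$ is a genuine probability measure on $\mathbb{Z}_+$ (as recalled in Definition \ref{MainDefinition}), pointwise convergence of the probability mass functions $\mathbb{P}(\mathcal{Z}_N=\cdot)$ to $\pi^{(s)}(\cdot)$ implies convergence in total variation (Scheff\'e's lemma), and a fortiori convergence in distribution $\mathcal{Z}_N\overset{\textnormal{d}}{\longrightarrow}\mathcal{Z}$ with $\mathsf{Law}(\mathcal{Z})=\pi^{(s)}$, as required. There is no real obstacle: the only thing to verify is the cancellation of $q$-Pochhammer factors in the limit, which is bookkeeping. The key content has already been packaged into Theorem \ref{MarkovChainRep2}.
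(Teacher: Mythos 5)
Your proposal is correct and follows essentially the same route as the paper: both read off $\mathsf{Law}\bigl(\sum_{-\infty}^{0}l_i^{(N)}(\mathbf{Z})\bigr)=\pi_N^{(s)}$ from Theorem \ref{MarkovChainRep2}, write down the mass function of $N-\sum_{-\infty}^{0}l_i^{(N)}(\mathbf{Z})$, and let $N\to\infty$ in the $q$-Pochhammer formula. The only difference is that you spell out the final step (pointwise convergence of pmfs plus Scheff\'e to upgrade to convergence in distribution), which the paper leaves implicit under ``the result then follows immediately.''
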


\begin{proof}
From Theorem \ref{MarkovChainRep2} we have:
\begin{align*}
\mathsf{Law}\left(\sum_{-\infty}^{0}l_i^{(N)}\left(\mathbf{Z}\right)\right)=\pi^{(s)}_N, \ \textnormal{ where } \mathsf{Law}\left(\mathbf{Z}\right)=\mathsf{M}^{(s)},
\end{align*}
since $\mathbf{k}^{(N)}(\mathbf{Z})$ is distributed according to $\mathsf{m}_N^{(s)}$. Thus, we obtain:
\begin{align*}
\mathsf{Law}\left[\sum_{i=1}^{\infty}l_i^{(N)}\left(\mathbf{Z}\right)\right](y)&=\mathsf{Law}\left[N-\sum_{-\infty}^{0}l_i^{(N)}\left(\mathbf{Z}\right)\right](y)\\&=\frac{\left(p^{-1-s};p^{-1}\right)_N^2\left(p^{-1};p^{-1}\right)_N^2p^{-y^2-sy}}{\left(p^{-1-s};p^{-1}\right)_{2N}\left(p^{-1};p^{-1}\right)_{N-y}^2\left(p^{-1};p^{-1}\right)_y\left(p^{-1-s};p^{-1}\right)_y}.
\end{align*}
The result then follows immediately by taking the $N\to \infty$ limit in the formula above.
\end{proof}

We can now prove part of the statement of Theorem \ref{MainResult}.

\begin{prop}\label{DeltaNaughtProp}
Let $s>-1$. Then, the probability measure $\nu^{(s)}$ is supported on $\Delta_0$.
\end{prop}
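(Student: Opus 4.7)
My plan is to use Proposition~\ref{PropositionApproximation} to identify $\nu^{(s)}$ with the law of $\mathbf{k}(\mathbf{Z})=(k_j(\mathbf{Z}))_{j\ge 1}$ when $\mathbf{Z}\sim\mathsf{M}^{(s)}$, and then verify the two defining conditions of $\Delta_0$ separately: (A)~$k_j(\mathbf{Z})\ge 0$ for every $j$, a.s.; (B)~$\#\{j:k_j(\mathbf{Z})>0\}<\infty$, a.s. Both parts will be deduced by passing to the $N\to\infty$ limit of the finite-dimensional laws $\mathsf{m}_N^{(s)}$, combining the Markov chain representations of Theorems~\ref{MarkovChainRep1}--\ref{MarkovChainRep2} and Lemma~\ref{PosNoConvLem} with the $\mathsf{M}^{(s)}$-a.s.\ pointwise convergence $k_j^{(N)}(\mathbf{Z})\to k_j(\mathbf{Z})$ on $\textnormal{Mat}_{\textnormal{reg}}\left(\mathbb{N},\mathbb{Q}_p\right)$ provided by Proposition~\ref{PropositionApproximation}.

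For (B), I will compute the first moment. Pointwise convergence in $\mathbb{Z}\cup\{-\infty\}$ forces $\mathbf{1}(k_j^{(N)}>0)\to\mathbf{1}(k_j>0)$ a.s., so bounded convergence yields $P(k_j(\mathbf{Z})>0)=\lim_N P\big(\sum_{i=1}^{\infty}l_i^{(N)}(\mathbf{Z})\ge j\big)$. Lemma~\ref{PosNoConvLem} identifies the limit as $\pi^{(s)}\big([j,\infty)\big)$, and summing over $j$ gives
\[
E\big[\#\{j:k_j(\mathbf{Z})>0\}\big]=\sum_{j\ge 1}\pi^{(s)}\big([j,\infty)\big)=\sum_{m\ge 1}m\,\pi^{(s)}(m)<\infty,
\]
where finiteness uses that $\pi^{(s)}(m)$ carries a $p^{-m^2-sm}$ super-exponential decay. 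Hence $\#\{j:k_j(\mathbf{Z})>0\}$ is a.s.\ finite.

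For (A), I fix $j\ge 1$. Since $k_j^{(N)}\to k_j$ in $\mathbb{Z}\cup\{-\infty\}$, the event $\{k_j\le -1\}$ is contained in $\liminf_N\{k_j^{(N)}\le -1\}$ (this holds whether $k_j$ is a finite integer $\le -1$ or $k_j=-\infty$, since in both cases eventually $k_j^{(N)}\le -1$). Fatou's lemma for events then gives $P(k_j(\mathbf{Z})\le -1)\le\liminf_N P(k_j^{(N)}(\mathbf{Z})\le -1)$. The $j$-th largest among $N$ singular numbers being $\le -1$ is equivalent to having at least $N-j+1$ of them $\le -1$, so $P(k_j^{(N)}\le -1)=P\big(\sum_{-\infty}^{-1}l_i^{(N)}(\mathbf{Z})\ge N-j+1\big)$. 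By step~1 of Theorem~\ref{MarkovChainRep1} combined with the identity $\sum_{-\infty}^{-1}l_i^{(N)}=N-\sum_{0}^{\infty}l_i^{(N)}$, the law of $\sum_{-\infty}^{-1}l_i^{(N)}$ is given explicitly by $y\mapsto\tilde{\pi}_N^{(s)}(N-y)$. Every $q$-Pochhammer factor in this expression has a clean $N\to\infty$ limit for fixed $y$, and the elementary computation gives $\tilde{\pi}_N^{(s)}(N-y)\to\pi^{(0)}(y)$, a genuine probability measure on $\mathbb{Z}_+$. This weak convergence yields tightness of $\{\sum_{-\infty}^{-1}l_i^{(N)}\}_{N\ge 1}$, which combined with $N-j+1\to\infty$ forces $P\big(\sum_{-\infty}^{-1}l_i^{(N)}\ge N-j+1\big)\to 0$, and therefore $P(k_j(\mathbf{Z})\le -1)=0$.

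Combining (A) (with a countable union bound over $j$) and (B) gives $\mathbf{k}(\mathbf{Z})\in\Delta_0$ almost surely, which is exactly $\nu^{(s)}(\Delta_0)=1$. The only step requiring a little care is the weak convergence $\mathsf{Law}\big(\sum_{-\infty}^{-1}l_i^{(N)}\big)\to\pi^{(0)}$; this is however an immediate pointwise limit of the explicit formula for $\tilde{\pi}_N^{(s)}$ (using $(p^{-1};p^{-1})_{N-y}\to(p^{-1};p^{-1})_\infty$ and $(p^{-1-s};p^{-1})_{2N}\to(p^{-1-s};p^{-1})_\infty$, where the latter holds because the tail $(p^{-1-s-N};p^{-1})_N\to 1$) and is the only concrete computation required beyond invoking the machinery already developed.
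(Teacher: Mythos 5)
Your proof is correct but takes a genuinely different route from the paper's for both halves of the statement. For $k_j\ge 0$ a.s., the paper applies Borel--Cantelli after establishing the quantitative estimate $\mathsf{M}^{(s)}\left(k_j^{(N)}<0\right)=\sum_{x=0}^{j-1}\tilde{\pi}_N^{(s)}(x)\lesssim p^{-(N-j)^2}$, whereas you combine Fatou for events with tightness of $\sum_{-\infty}^{-1}l_i^{(N)}$, the latter deduced from the pointwise limit $\tilde{\pi}_N^{(s)}(N-y)\to\pi^{(0)}(y)$ (a computation not carried out in the paper, but a direct companion of Lemma \ref{PosNoConvLem}, and correct). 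Your route is softer in that it only needs $\mathsf{M}^{(s)}\left(k_j^{(N)}<0\right)\to 0$ rather than summability over $N$, at the cost of one extra $q$-Pochhammer limit. For $\#\{j:k_j>0\}<\infty$ a.s., the paper argues by contradiction from tightness of $\sum_{i\ge 1}l_i^{(N)}$ (Lemma \ref{PosNoConvLem}), while you compute the expected value $\sum_{m\ge 1}m\,\pi^{(s)}(m)<\infty$ of $\#\{j:k_j>0\}$ directly by bounded convergence and Tonelli, which is cleaner and yields the expectation as a by-product. All supporting steps check out: the inclusion $\{k_j\le -1\}\subset\liminf_N\{k_j^{(N)}\le -1\}$ holds whether $k_j$ is a negative integer or $-\infty$; the identity $\{k_j^{(N)}\le -1\}=\{\sum_{-\infty}^{-1}l_i^{(N)}\ge N-j+1\}$ is exact; and $\mathbf{1}(k_j^{(N)}>0)\to\mathbf{1}(k_j>0)$ a.s.\ on $\textnormal{Mat}_{\textnormal{reg}}\left(\mathbb{N},\mathbb{Q}_p\right)$. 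There is no circularity, since Proposition \ref{PropositionApproximation}, Theorems \ref{MarkovChainRep1}--\ref{MarkovChainRep2}, and Lemma \ref{PosNoConvLem} are all proved independently of Proposition \ref{DeltaNaughtProp}.
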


\begin{proof}
We define $\Delta_{\ge 0}=\Delta\cap \mathbb{Z}_+^{\mathbb{N}}$. We will first show the weaker statement that $\nu^{(s)}$ is supported on $\Delta_{\ge 0}$. From Proposition \ref{PropositionApproximation} we need to show that for each $j\ge 1$:
\begin{align*}
k_j\left(\mathbf{Z}\right)\ge 0, \ \ \textnormal{for } \mathsf{M}^{(s)}-\textnormal{a.e.} \  \mathbf{Z}\in \textnormal{Mat}_{\textnormal{reg}}\left(\mathbb{N},\mathbb{Q}_p\right).
\end{align*}
We prove the statement above for a fixed $j\ge 1$ (the general statement clearly follows from taking a countable union). Since $\lim_{N\ge j}k_j^{(N)}\left(\mathbf{Z}\right)=k_j\left(\mathbf{Z}\right)$, it then suffices to show that:
\begin{align*}
\mathsf{M}^{(s)}\left(k_j^{(N)}\left(\mathbf{Z}\right)\ge 0 \ \textnormal{eventually}\right)=1.
\end{align*}
For any $N\ge 1$ we define the event $\mathsf{E}_N^{(j)}=\big\{k_j^{(N)}\left(\mathbf{Z}\right)<0 \big\}$. We are then required to show:
\begin{align*}
\mathsf{M}^{(s)}\left(\limsup\mathsf{E}_N^{(j)}\right)=\mathsf{M}^{(s)}\left(\mathsf{E}_N^{(j)} \textnormal{'s occur infinitely often}\right)=0.
\end{align*}
By the Borel-Cantelli lemma it suffices to show that:
\begin{align*}
\sum_{N=1}^{\infty}\mathsf{M}^{(s)}\left(\mathsf{E}_N^{(j)}\right)<\infty.
\end{align*}
Since $k_1^{(N)}\left(\mathbf{Z}\right)\ge k_2^{(N)}\left(\mathbf{Z}\right) \ge \cdots \ge k_N^{(N)}\left(\mathbf{Z}\right)$ and $\sum_{i=0}^{\infty}l_i^{(N)}\left(\mathbf{Z}\right)=\#\big\{i:k_i^{(N)}\left(\mathbf{Z}\right)\ge 0 \big\}$ we have that:
\begin{align*}
\mathsf{M}^{(s)}\left(\mathsf{E}_N^{(j)}\right)=\mathsf{M}^{(s)}\left(k_j^{(N)}\left(\mathbf{Z}\right)<0\right)=\mathsf{M}^{(s)}\left(\sum_{i=0}^{\infty}l_i^{(N)}\left(\mathbf{Z}\right)<j\right).
\end{align*}
From Theorem \ref{MarkovChainRep1}, since $\mathbf{k}^{(N)}(\mathbf{Z})$ is distributed according to $\mathsf{m}_N^{(s)}$, we know that:
\begin{align*}
\mathsf{M}^{(s)}\left(\sum_{i=0}^{\infty}l_i^{(N)}\left(\mathbf{Z}\right)<j\right)=\sum_{x=0}^{j-1}\tilde{\pi}^{(s)}_N(x).
\end{align*}
Moreover, we have the following simple estimate uniformly in $N$:
\begin{align*}
\tilde{\pi}^{(s)}_N(x)=\frac{\left(p^{-1-s};p^{-1}\right)_N^2\left(p^{-1};p^{-1}\right)_N^2p^{-\left(N-x\right)^2}}{\left(p^{-1};p^{-1}\right)_{2N}\left(p^{-1};p^{-1}\right)_x\left(p^{-1-s};p^{-1}\right)_x\left(p^{-1};p^{-1}\right)_{N-x}^2}\le C^{(s)}_j p^{-\left(N-j\right)^2}, \ \forall \  0\le x \le j,
\end{align*}
for some absolute constant $C_j^{(s)}$ (independent of $N$ and whose exact numerical value is not important here). Thus,
\begin{align*}
\mathsf{M}^{(s)}\left(\sum_{i=0}^{\infty}l_i^{(N)}\left(\mathbf{Z}\right)<j\right)\lesssim p^{-\left(N-j\right)^2}
\end{align*}
and so:
\begin{align*}
\sum_{N=1}^{\infty}\mathsf{M}^{(s)}\left(\mathsf{E}_N^{(j)}\right)=\sum_{N=1}^{\infty}\mathsf{M}^{(s)}\left(\sum_{i=0}^{\infty}l_i^{(N)}\left(\mathbf{Z}\right)<j\right)\lesssim \sum_{N=1}^{\infty} p^{-\left(N-j\right)^2}<\infty,
\end{align*}
which proves that $\nu^{(s)}$ is supported on $\Delta_{\ge 0}$. We now show that it is actually supported on $\Delta_0\subset \Delta_{\ge 0}$. Assume that it is not. Thus, since we have already shown that $\nu^{(s)}$ is supported on $\Delta_{\ge 0}$ the following should hold:
\begin{align*}
\mathsf{M}^{(s)}\left(\#\big\{i:k_i\left(\mathbf{Z}\right)>0\big\}=\infty\right)>0.
\end{align*}
Then, we must have:
\begin{align*}
\mathsf{M}^{(s)}\left(\#\big\{i:k^{(N)}_i\left(\mathbf{Z}\right)>0\big\}\overset{N\to \infty}{\longrightarrow}\infty\right)>0.
\end{align*}
On the other hand, by definition we have that:
\begin{align*}
\#\big\{i:k^{(N)}_i\left(\mathbf{Z}\right)>0\big\}=\sum_{i=1}^{\infty}l_i^{(N)}\left(\mathbf{Z}\right)=N-\sum_{-\infty}^{0}l_i^{(N)}\left(\mathbf{Z}\right).
\end{align*}
Since by Lemma \ref{PosNoConvLem} we know that
\begin{align*}
N-\sum_{-\infty}^{0}l_i^{(N)}\left(\mathbf{Z}\right)\overset{\textnormal{d}}{\longrightarrow} \mathcal{Z}, \ \textnormal{ with } \mathsf{Law}\left(\mathcal{Z}\right)=\pi^{(s)}
\end{align*}
and thus the sequence of random variables $\big\{\sum_{i=1}^\infty l_i^{(N)}\left(\mathbf{Z}\right)\big\}_{N=1}^\infty$ is tight, we obtain:
\begin{align*}
\mathsf{M}^{(s)}\left(\sum_{i=1}^\infty l_i^{(N)}\left(\mathbf{Z}\right)\overset{N\to \infty}{\longrightarrow}\infty\right)=0.
\end{align*}
This gives a contradiction and completes the proof.
\end{proof}

In order to complete the proof of Theorem \ref{MainResult} we first need a definition (that will also be used in the appendix in Section \ref{Appendix}) and a simple lemma.

\begin{defn}\label{DefinitionSeqConv}
Suppose that we are given a sequence $\{\mathbf{k}^{(N)}\}_{N\ge 1}$ so that $\mathbf{k}^{(N)}\in \Delta_N$ and $\mathbf{k}\in \Delta$. We write
\begin{align*}
\mathbf{k}^{(N)} \longrightarrow \mathbf{k}
\end{align*}
if the following limits exist:
\begin{align*}
\lim_{N\ge j}k_j^{(N)}=k_j, \ j=1,2,3,\dots .
\end{align*}
\end{defn}

\begin{lem}\label{DeterministicConvergenceLemma}
Suppose that we are given a sequence $\{\mathbf{k}^{(N)}\}_{N\ge 1}$ so that:
\begin{align}\label{LemAssumption}
\Delta_N\ni\mathbf{k}^{(N)}\longrightarrow \mathbf{k}\in\Delta_0.
\end{align}
Then, we have:
\begin{align}
\big\{ l_i\left(\mathbf{k}^{(N)}\right)\big\}_{i=1}^{\infty}&\overset{N\to \infty}{\longrightarrow}\big\{ l_i\left(\mathbf{k}\right)\big\}_{i=1}^\infty,\label{claim1}\\
\bigg\{ \sum_{j=i}^{\infty}l_j\left(\mathbf{k}^{(N)}\right)\bigg\}_{i=1}^{\infty}&\overset{N\to \infty}{\longrightarrow}\bigg\{\sum_{j=i}^\infty l_j\left(\mathbf{k}\right)\bigg\}_{i=1}^\infty\label{claim2}.
\end{align}
\end{lem}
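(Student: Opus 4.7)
The plan is to exploit the fact that convergence in $\mathbb{Z}\cup\{-\infty\}$ (with its one-point compactification topology) to a finite integer means eventual equality, combined with the cutoff behaviour forced by $\mathbf{k}\in\Delta_0$.

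Since $\mathbf{k}\in\Delta_0$, we can fix $J=\max\{j:k_j>0\}$ (with $J=0$ if $\mathbf{k}$ is identically zero), so that $k_j=0$ for every $j>J$. First, I would unpack Definition \ref{DefinitionSeqConv}: for each fixed $j$, the hypothesis $k_j^{(N)}\to k_j\in\mathbb{Z}_+$ says that $k_j^{(N)}$ is eventually equal to $k_j$, hence there exists $N_j$ with $k_j^{(N)}=k_j$ for all $N\ge N_j$. In particular this applies to $j=1,2,\dots,J+1$, and I set $N_*=\max\{N_1,\dots,N_{J+1}\}$.

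The main observation is a localisation statement: for $N\ge N_*$, every index $\ell$ with $k_\ell^{(N)}\ge 1$ satisfies $\ell\le J$. Indeed, $k_{J+1}^{(N)}=k_{J+1}=0$, and since $\mathbf{k}^{(N)}\in\Delta_N$ is weakly decreasing we have $k_\ell^{(N)}\le k_{J+1}^{(N)}=0$ for all $\ell\ge J+1$. Combined with $k_j^{(N)}=k_j$ for $j\le J$ (true since $N\ge N_*$), this yields, for every $i\ge 1$ and $N\ge N_*$,
\begin{align*}
l_i\left(\mathbf{k}^{(N)}\right)=\#\{\ell\le J:k_\ell^{(N)}=i\}=\#\{\ell\le J:k_\ell=i\}=l_i(\mathbf{k}),
\end{align*}
which proves (\ref{claim1}), with the convergence being eventual equality for each fixed $i$.

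For (\ref{claim2}), the same localisation gives, for any fixed $i\ge 1$ and every $N\ge N_*$,
\begin{align*}
\sum_{j=i}^{\infty}l_j\left(\mathbf{k}^{(N)}\right)=\#\{\ell:k_\ell^{(N)}\ge i\}=\#\{\ell\le J:k_\ell\ge i\}=\sum_{j=i}^{\infty}l_j(\mathbf{k}),
\end{align*}
again with eventual equality, which is exactly (\ref{claim2}). There is no real obstacle here: the only point that requires a moment's thought is the monotonicity argument that forces positive entries of $\mathbf{k}^{(N)}$ to occur in the first $J$ positions, and this is what prevents mass from ``escaping to $+\infty$'' in the index even though the vectors $\mathbf{k}^{(N)}$ have length $N$ growing with $N$.
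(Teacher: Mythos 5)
Your proof is correct and follows essentially the same route as the paper: both fix the cutoff index (your $J$, the paper's $M$) beyond which $\mathbf{k}$ vanishes, use that convergence in $\mathbb{Z}\cup\{-\infty\}$ to an integer means eventual equality to stabilize $k_1^{(N)},\dots,k_{J+1}^{(N)}$, and then invoke monotonicity of $\mathbf{k}^{(N)}$ to localize all positive entries to the first $J$ slots. You simply spell out the ``easy to see'' step the paper leaves implicit, giving eventual equality of $l_i(\mathbf{k}^{(N)})$ and $\sum_{j\ge i}l_j(\mathbf{k}^{(N)})$ for each fixed $i$.
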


\begin{proof}
Write $\Delta_{M,>0}=\big\{(k_1,\dots,k_M)\in \mathbb{Z}^M:k_1\ge k_2\ge \dots \ge k_M>0 \big\}$. Since $\mathbf{k}\in \Delta_0$, there exists $M$ such that $\mathbf{k}=\left(k_1,\dots,k_M,0,0,0,\dots\right)$, with $\left(k_1,\dots,k_M\right)\in \Delta_{M,>0}$. Moreover, from (\ref{LemAssumption}) there exists $N_0$ large enough so that for all $N\ge N_0$, $\mathbf{k}^{(N)}$ is of the form:
\begin{align*}
\mathbf{k}^{(N)}=\left(k_1^{(N)},\dots, k_M^{(N)},0,\dots,0,\star,\star,\star,\dots\right), \ \textnormal{ with } \left(k_1^{(N)},\dots, k_M^{(N)}\right)\in \Delta_{M,>0},
\end{align*}
where the $\star$'s are arbitrary negative integers. Thus, by restricting to $\big\{\left(k_1^{(N)},\dots, k_M^{(N)}\right)\big\}_{N\ge N_0}$ the claim (\ref{claim1}) follows immediately from the, easy to see, fact that if:
\begin{align*}
\Delta_{M,>0}\ni\left(k_1^{(N)},\dots, k_M^{(N)}\right)\longrightarrow \left(k_1,\dots, k_M\right)\in\Delta_{M,>0}
\end{align*}
then we have:
\begin{align*}
\big\{ l_i\left(\mathbf{k}^{(N)}\right)\big\}_{i=1}^{\infty}\overset{N\to \infty}{\longrightarrow}\big\{ l_i\left(\mathbf{k}\right)\big\}_{i=1}^\infty.
\end{align*}
Claim (\ref{claim2}) also follows similarly.
\end{proof}

We finally put all the pieces together to prove Theorem \ref{MainResult}.

\begin{proof}[Proof of Theorem \ref{MainResult}]
Observe that, since from Proposition \ref{DeltaNaughtProp} $\mathbf{k}(\mathbf{Z})\in \Delta_0$ for $\mathsf{M}^{(s)}$-a.e. $\mathbf{Z}$, we have that $\{k_i(\mathbf{Z})\}_{i=1}^\infty$ is completely determined by $\{l_i(\mathbf{Z})\}_{i=1}^{\infty}$ which is in turn determined by $\big\{\sum_{j=i}^{\infty}l_j(\mathbf{Z}) \big\}_{i=1}^\infty$. Thus, by combining Propositions \ref{PropositionApproximation} and \ref{DeltaNaughtProp} and Lemma \ref{DeterministicConvergenceLemma} it remains to take the $N\to\infty$ limit of $\big\{\sum_{j=i}^{\infty}l^{(N)}_j(\mathbf{Z}) \big\}_{i=1}^\infty$. But by Theorems \ref{MarkovChainRep1} and \ref{MarkovChainRep2} we have that for each $N\ge 1$ the random sequence $\big\{\mathsf{X}_i^{(N)}\big\}_{i=1}^\infty$, where $\mathsf{X}_i^{(N)}=\sum_{j=i}^{\infty}l^{(N)}_j(\mathbf{Z})$, is distributed as a time homogeneous Markov chain with initial distribution $\pi_N^{(s)}(N-\cdot)$ and transition kernel $\mathsf{P}^{(s)}$. The statement of the theorem then follows by making use of Lemma \ref{PosNoConvLem}.
\end{proof}

\section{Appendix: Proof of Proposition \ref{PropositionApproximation}}\label{Appendix}
In order to prove Proposition \ref{PropositionApproximation} we require some preliminaries. We first need the notion of an orbital measure. For $\mathbf{k}^{(N)}\in \Delta_N$ define the probability measure $\mathsf{Orb}_{\mathbf{k}^{(N)}}$:
\begin{align*}
\mathsf{Orb}_{\mathbf{k}^{(N)}}=\mathsf{Law}\left(\mathbf{B}\cdot\textnormal{diag}\left(p^{-k^{(N)}_1},p^{-k^{(N)}_2},\dots,p^{-k^{(N)}_N}\right)\cdot \mathbf{C}\right),
\end{align*} 
where $\mathbf{B}$ and $\mathbf{C}$ are independent Haar distributed matrices from $\textnormal{GL}(N,\mathbb{Z}_p)$. By identifying $\textnormal{Mat}\left(N,\mathbb{Q}_p\right)$ in a natural way with the subset of $\textnormal{Mat}\left(\mathbb{N},\mathbb{Q}_p\right)$ we view $\mathsf{Orb}_{\mathbf{k}^{(N)}}$ as a probability measure on  $\textnormal{Mat}\left(\mathbb{N},\mathbb{Q}_p\right)$ which by definition is $\textnormal{GL}(\infty,\mathbb{Z}_p)\times \textnormal{GL}(\infty,\mathbb{Z}_p)$-invariant. We then have the following proposition on convergence of orbital measures.

\begin{prop}\label{OrbitalMeasuresProp}
Let $\mathbf{k}\in \Delta$ and $\{\mathbf{k}^{(N)}\}_{N\ge 1}$ with $\mathbf{k}^{(N)}\in \Delta_N$. Then, the sequence of probability measures $\big\{\mathsf{Orb}_{\mathbf{k}^{(N)}} \big\}_{N\ge 1}$ converges weakly to some probability measure $\mu$ on $\textnormal{Mat}\left(\mathbb{N},\mathbb{Q}_p\right)$:
\begin{align}\label{OrbitalLimits1}
\mathsf{Orb}_{\mathbf{k}^{(N)}}\implies \mu
\end{align}
if and only if:
\begin{align}\label{OrbitalLimits2}
\mathbf{k}^{(N)} \longrightarrow \mathbf{k}.
\end{align}
If that is the case then $\mu=\mu_{\mathbf{k}}$ defined in Definition \ref{DefinitionErgodicMeasure}.
\end{prop}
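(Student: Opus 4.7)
The plan is to reduce weak convergence on $\textnormal{Mat}\left(\mathbb{N},\mathbb{Q}_p\right)$, which carries the Tychonoff product topology, to weak convergence of each finite-dimensional projection $\left(\Pi_M^\infty\right)_*\mathsf{Orb}_{\mathbf{k}^{(N)}}$ on $\textnormal{Mat}\left(M,\mathbb{Q}_p\right)$, $M\ge 1$. For $N\ge M$ this projection is the law of the top-left $M\times M$ block of $\mathbf{B}\cdot\textnormal{diag}\left(p^{-k_1^{(N)}},\dots,p^{-k_N^{(N)}}\right)\cdot\mathbf{C}$, whose $(i,j)$ entry equals $\sum_{r=1}^{N}\mathbf{B}_{ir}p^{-k_r^{(N)}}\mathbf{C}_{rj}$.

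For the sufficiency direction, assume $\mathbf{k}^{(N)}\longrightarrow \mathbf{k}$ and let $k=\lim_{j\to\infty}k_j\in\mathbb{Z}\cup\{-\infty\}$. I would split each such corner entry at a large fixed truncation $J$ and pass to the limit $N\to\infty$ before $J\to\infty$. The core technical input is that, for fixed $M$ and $J$, the joint law of the entries $\{\mathbf{B}_{ir},\mathbf{C}_{rj}:1\le i,j\le M,\ 1\le r\le J\}$ under Haar on $\textnormal{GL}(N,\mathbb{Z}_p)\times\textnormal{GL}(N,\mathbb{Z}_p)$ converges as $N\to\infty$ to the product of $d\mathsf{vol}_1$; this follows from the explicit formula for $d\mathsf{Haar}$ on $\textnormal{GL}(N,\mathbb{Z}_p)$ in terms of $d\mathsf{vol}$ together with $\left(p^{-1};p^{-1}\right)_N\to \left(p^{-1};p^{-1}\right)_\infty>0$, which makes the invertibility constraint asymptotically negligible. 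Granted this, the truncated partial sum converges in distribution to $\sum_{r:\,k_r>k}p^{-k_r}X_i^{(r)}Y_j^{(r)}$, and by the ultrametric property the tail $\sum_{r>J}\mathbf{B}_{ir}p^{-k_r^{(N)}}\mathbf{C}_{rj}$ lies in $p^{-k}\mathbb{Z}_p$ (and vanishes if $k=-\infty$), while the same asymptotic uniformity identifies its limit distribution with $p^{-k}Z_{ij}$, $Z_{ij}\sim d\mathsf{vol}_1$. This matches Definition \ref{DefinitionErgodicMeasure} and yields $\mathsf{Orb}_{\mathbf{k}^{(N)}}\implies \mu_{\mathbf{k}}$.

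For the necessity direction, suppose $\mathsf{Orb}_{\mathbf{k}^{(N)}}\implies \mu$. By compactness of $\mathbb{Z}\cup\{\pm\infty\}$, a diagonal extraction yields a subsequence along which $k_j^{(N)}\to k_j^*\in\mathbb{Z}\cup\{\pm\infty\}$ for each $j\ge 1$. The possibility $k_j^*=+\infty$ is excluded by tightness: if $k_1^{(N)}\to+\infty$, then using the asymptotic uniformity above for the entries $\mathbf{B}_{1r},\mathbf{C}_{r1}$ ranging over the top-equi-magnitude block of $\mathbf{k}^{(N)}$, with probability bounded below this block sums to a unit of $\mathbb{Z}_p$, so the $(1,1)$ entry has absolute value $p^{k_1^{(N)}}\to\infty$, contradicting tightness of $\left(\Pi_1^\infty\right)_*\mathsf{Orb}_{\mathbf{k}^{(N)}}$. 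Hence $\mathbf{k}^*\in\Delta$, and the already-established sufficiency direction gives $\mu=\mu_{\mathbf{k}^*}$ along this subsequence. The injectivity half of Theorem \ref{ClassificationTheorem} forces every convergent subsequence of $\{\mathbf{k}^{(N)}\}$ to share the same limit $\mathbf{k}^*$, whence $\mathbf{k}^{(N)}\longrightarrow\mathbf{k}^*$ for the full sequence.

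The main obstacle is the asymptotic uniformity statement underlying both directions: namely, that for fixed $M,J$, the top-left $M\times J$ block of a Haar random element of $\textnormal{GL}(N,\mathbb{Z}_p)$ converges in distribution as $N\to\infty$ to an $M\times J$ array of i.i.d.\ uniform $\mathbb{Z}_p$ random variables. Once this is in hand, the ultrametric property of $\mathbb{Q}_p$ makes the exchange of the $N\to\infty$ and $J\to\infty$ limits, as well as the tail bookkeeping, far cleaner than its Archimedean counterparts, since the residual sum is automatically confined to $p^{-k}\mathbb{Z}_p$ as soon as $J$ passes the last index at which $k_r^{(N)}>k$.
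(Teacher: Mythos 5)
Your proposal takes a genuinely different route from the paper. The paper reduces the forward implication, after establishing tightness and invoking the subsequence principle, to the identification of subsequential limits, which it delegates wholesale to the proof of Theorem 8.8 in Bufetov--Qiu; for the converse it cites Lemma 8.6 of the same reference for $\sup_N k_1^{(N)}<\infty$ and Proposition 5.1 there (equivalently the injectivity half of Theorem \ref{ClassificationTheorem}, which is what you invoke) for distinctness of the $\mu_{\mathbf{k}}$. You instead aim for a self-contained proof of both implications, keeping only the injectivity as an external input. The converse direction, as you set it up (diagonal extraction in $\mathbb{Z}\cup\{\pm\infty\}$, exclusion of $+\infty$ by tightness, the already-proved forward direction, then injectivity), is structurally the same as the paper's and is fine modulo what follows.

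There is, however, a genuine gap in the forward direction. You correctly isolate the need to prove that, for fixed $M$ and $J$, the top-left $M\times J$ block of Haar-distributed $(\mathbf{B},\mathbf{C})\in\textnormal{GL}(N,\mathbb{Z}_p)^2$ converges as $N\to\infty$ to an array of i.i.d.\ $d\mathsf{vol}_1$ variables, and you correctly observe via the ultrametric inequality that the tail $\sum_{r>J}p^{-k_r^{(N)}}\mathbf{B}_{ir}\mathbf{C}_{rj}$ eventually lies in $p^{-k_J^{(N)}}\mathbb{Z}_p$. But the assertion that ``the same asymptotic uniformity identifies its limit distribution with $p^{-k}Z_{ij}$'' does not follow: the tail is a sum over $r=J+1,\dots,N$, that is over a \emph{growing} number of entries of $\mathbf{B}$ and $\mathbf{C}$, and it must converge jointly with the head with the correct dependence structure (independent, uniform on $p^{-k}\mathbb{Z}_p$). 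The finite-block statement says nothing about the law of a sum of $N-J\to\infty$ Haar-coupled entries, nor about the conditional law of that sum given the head. Supplying this requires either control of the conditional distribution of the remaining columns of a Haar matrix given the first $J$, or an equidistribution argument for $\sum_r\mathbf{B}_{ir}\mathbf{C}_{rj}$ over a long block modulo $p^L$ — and that is precisely the substantive content of the Bufetov--Qiu argument the paper cites. Relatedly, the tightness step in your converse direction (``with probability bounded below this block sums to a unit'') invokes the same unproven fact for a block whose length may tend to infinity; one needs a lower bound on $\mathbb{P}\bigl(\sum_{r=1}^{m}\mathbf{B}_{1r}\mathbf{C}_{r1}\in\mathbb{Z}_p^{\times}\bigr)$ that is uniform in $m$, which is true but is not a consequence of the fixed-size block convergence as stated.
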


\begin{proof}
We first prove $(\ref{OrbitalLimits2})\implies(\ref{OrbitalLimits1})$. From (\ref{OrbitalLimits2}) we get that $\big\{\mathsf{Orb}_{\mathbf{k}^{(N)}} \big\}_{N\ge 1}$ is tight, so every subsequence has a convergent subsubsequence. It then suffices, by the subsequence principle, to prove that all of these limits coincide and equal $\mu_{\mathbf{k}}$ which is then part of the proof of Theorem 8.8 in \cite{BufetovQiuClassificaiton}.

Conversely suppose (\ref{OrbitalLimits1}) holds. By Lemma 8.6 of \cite{BufetovQiuClassificaiton} we get that $\sup_{N\ge 1}k_1^{(N)}<\infty$. Then, every subsequence $\{\mathbf{k}^{(n_l)}\}_{l\in \mathbb{N}}$ has a convergent subsubsequence $\{\mathbf{k}^{(\tilde{n}_l)}\}_{l\in \mathbb{N}}$ to some $\mathbf{k}\in \Delta$. Namely, as in Definition \ref{DefinitionSeqConv}, for any $j\in \mathbb{N}$ there exists $k_j \in \mathbb{Z}\cup \{-\infty \}$ so that:
\begin{align*}
\lim_{l\to \infty}k_j^{(\tilde{n}_l)}=k_j.
\end{align*}
By the subsequence principle it again suffices to prove that all these limits coincide. Suppose otherwise, that there exists sequences $\{m_l \}_{l\in \mathbb{N}}, \{m'_l \}_{l\in \mathbb{N}}$ and $\mathbf{k}\neq \mathbf{k}' \in \Delta$ such that:
\begin{align*}
\mathbf{k}^{(m_l)}\overset{l\to \infty}{\longrightarrow}\mathbf{k}, \ \mathbf{k}^{(m'_l)}\overset{l\to \infty}{\longrightarrow}\mathbf{k}'.
\end{align*}
Then, from the implication $(\ref{OrbitalLimits2})\implies(\ref{OrbitalLimits1})$, we get that:
\begin{align*}
\mathsf{Orb}_{\mathbf{k}^{(m_l)}}\overset{l\to \infty}{\implies} \mu_{\mathbf{k}}, \ \mathsf{Orb}_{\mathbf{k}^{(m'_l)}}\overset{l\to \infty}{\implies} \mu_{\mathbf{k}'}.
\end{align*}
But by Proposition 5.1 of \cite{BufetovQiuClassificaiton} $\mu_{\mathbf{k}}$ and $\mu_{\mathbf{k}'}$ are distinct which contradicts the weak convergence in (\ref{OrbitalLimits1}).
\end{proof}

Finally, Proposition \ref{PropositionApproximation} follows from a combination of Propositions \ref{ErgodicProp1} and \ref{ErgodicProp2} below.

\begin{prop}\label{ErgodicProp1}
Let $\mathcal{M}\in \mathcal{P}_{\textnormal{inv}}\left(\textnormal{Mat}\left(\mathbb{N},\mathbb{Q}_p\right)\right)$. Then, $\mathcal{M}$ is supported on $\textnormal{Mat}_{\textnormal{reg}}\left(\mathbb{N},\mathbb{Q}_p\right)$.
\end{prop}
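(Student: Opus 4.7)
By Proposition \ref{ErgodicDecompositionProp}, any $\mathcal{M} \in \mathcal{P}_{\textnormal{inv}}\left(\textnormal{Mat}\left(\mathbb{N},\mathbb{Q}_p\right)\right)$ admits an ergodic decomposition $\mathcal{M} = \int_{\Delta} \mu_{\mathbf{k}}\, \nu^{\mathcal{M}}(d\mathbf{k})$, and $\textnormal{Mat}_{\textnormal{reg}}\left(\mathbb{N},\mathbb{Q}_p\right)$ is a Borel subset of $\textnormal{Mat}\left(\mathbb{N},\mathbb{Q}_p\right)$. Hence it suffices to establish $\mu_{\mathbf{k}}\left(\textnormal{Mat}_{\textnormal{reg}}\left(\mathbb{N},\mathbb{Q}_p\right)\right) = 1$ for every $\mathbf{k} \in \Delta$, i.e.\ to prove the proposition in the ergodic case $\mathcal{M} = \mu_{\mathbf{k}}$.

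\textbf{Orbital averages as conditional expectations via a reverse martingale.} For $N \ge 1$ let $\mathcal{T}_N$ denote the $\sigma$-algebra of $\textnormal{GL}(N,\mathbb{Z}_p) \times \textnormal{GL}(N,\mathbb{Z}_p)$-invariant Borel subsets of $\textnormal{Mat}\left(\mathbb{N},\mathbb{Q}_p\right)$, where the $N\times N$ group is embedded into $\textnormal{GL}(\infty,\mathbb{Z}_p)$ via its top-left corner. The sequence $\mathcal{T}_1 \supset \mathcal{T}_2 \supset \cdots$ is decreasing, and since the action of $\textnormal{GL}(\infty,\mathbb{Z}_p) \times \textnormal{GL}(\infty,\mathbb{Z}_p)$ is generated by these embedded subgroups, ergodicity of $\mathcal{M}$ forces the tail $\sigma$-algebra $\bigcap_{N} \mathcal{T}_N$ to be $\mathcal{M}$-trivial. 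For any continuous cylinder function $f$ depending only on entries $\{\mathbf{Z}_{ij}\}_{i,j \le M}$ with $M \le N$, bi-invariance of Haar measure on $\textnormal{GL}(N,\mathbb{Z}_p)$ and the singular decomposition (\ref{SingularDecomposition}) give
$$\mathbb{E}_{\mathcal{M}}\!\left[f \,\big|\, \mathcal{T}_N\right]\!(\mathbf{Z}) = \int_{\textnormal{GL}(N,\mathbb{Z}_p)^2} f\!\left(\mathbf{B}\,\Pi^{\infty}_N(\mathbf{Z})\,\mathbf{C}^{-1}\right) d\mathsf{Haar}(\mathbf{B})\,d\mathsf{Haar}(\mathbf{C}) = \int f\, d\mathsf{Orb}_{\mathbf{k}^{(N)}(\mathbf{Z})}.$$
Reverse martingale convergence and triviality of the tail then yield
$$\int f\, d\mathsf{Orb}_{\mathbf{k}^{(N)}(\mathbf{Z})} \longrightarrow \int f\, d\mathcal{M} \qquad \textnormal{for } \mathcal{M}\textnormal{-a.e. } \mathbf{Z}.$$

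\textbf{Conclusion via the orbital measure criterion.} Since $\textnormal{Mat}\left(\mathbb{N},\mathbb{Q}_p\right)$ is Polish and equals the projective limit of $\textnormal{Mat}(M,\mathbb{Q}_p)$ under the corners maps, continuous cylinder functions form a convergence-determining class for weak convergence of probability measures. Choosing a countable such family and combining the associated null sets, we conclude that for $\mathcal{M}$-almost every $\mathbf{Z}$ one has $\mathsf{Orb}_{\mathbf{k}^{(N)}(\mathbf{Z})} \Longrightarrow \mu_{\mathbf{k}}$. Proposition \ref{OrbitalMeasuresProp} translates this into $\mathbf{k}^{(N)}(\mathbf{Z}) \to \mathbf{k}$ in the sense of Definition \ref{DefinitionSeqConv}, which is precisely the statement $\mathbf{Z} \in \textnormal{Mat}_{\textnormal{reg}}\left(\mathbb{N},\mathbb{Q}_p\right)$, so we are done. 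I expect the main technical point to be the identification of the conditional expectation with the orbital integral (which follows once one carefully tracks how an element of the embedded $\textnormal{GL}(N,\mathbb{Z}_p)^2$ acts on the $M\times M$ corner through the top-left $N\times N$ block); the selection of a countable convergence-determining family and the reverse martingale argument are then routine.
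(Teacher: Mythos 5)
Your proof is correct and follows essentially the same route as the paper: reduce to the ergodic case via Proposition \ref{ErgodicDecompositionProp}, show that for $\mu_{\mathbf{k}}$-a.e.\ $\mathbf{Z}$ the orbital measures $\mathsf{Orb}_{\mathbf{k}^{(N)}(\mathbf{Z})}$ converge weakly to $\mu_{\mathbf{k}}$, and then translate this via Proposition \ref{OrbitalMeasuresProp} into existence of the limits $\lim_N k_j^{(N)}(\mathbf{Z})$. The one difference is that where the paper simply cites Vershik's ergodic theorem (Theorem 3.2 of \cite{OlshanskiVershik}) for the almost-sure convergence of orbital averages, you unpack that theorem, re-deriving it from the identification of the orbital average with $\mathbb{E}_{\mathcal{M}}[f\,|\,\mathcal{T}_N]$ plus reverse martingale convergence and triviality of the tail $\sigma$-algebra $\bigcap_N \mathcal{T}_N$ under ergodicity; this is exactly the standard proof of the cited result, so the content is the same, though your version is more self-contained.
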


We define
\begin{align*}
\mathsf{Sing}:\textnormal{Mat}_{\textnormal{reg}}\left(\mathbb{N},\mathbb{Q}_p\right) &\longrightarrow \Delta,\\
\mathbf{Z} &\mapsto \left(k_1\left(\mathbf{Z}\right),k_2\left(\mathbf{Z}\right),k_3\left(\mathbf{Z}\right),\dots\right).
\end{align*}
and observe that $\mathsf{Sing}$ is a Borel map.

\begin{prop}\label{ErgodicProp2}
Let $\mathcal{M}\in \mathcal{P}_{\textnormal{inv}}\left(\textnormal{Mat}\left(\mathbb{N},\mathbb{Q}_p\right)\right)$ and let $\nu^{\mathcal{M}}$ be the corresponding ergodic decomposition probability measure on $\Delta$. Then,
\begin{align*}
\left(\mathsf{Sing}\right)_*\mathcal{M}=\nu^{\mathcal{M}}.
\end{align*}
\end{prop}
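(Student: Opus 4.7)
The plan is to adapt the Vershik--Kerov approximation scheme used by Borodin and Olshanski in \cite{BorodinOlshanskiErgodic}: show that the measures $\tilde{\nu}_N^{\mathcal{M}} = (\mathsf{Sing}_N \circ \Pi_N^\infty)_*\mathcal{M}$ on $\Delta_N \hookrightarrow \Delta$ converge weakly to $\nu^{\mathcal{M}}$ as $N \to \infty$, and simultaneously to $(\mathsf{Sing})_*\mathcal{M}$ by Proposition \ref{ErgodicProp1}, which forces the two to agree.

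First, for each fixed $N$, I would establish the level-$N$ disintegration
\begin{align*}
\mathcal{M} = \int_{\Delta_N} \mathsf{Orb}_{\mathbf{k}^{(N)}} \, \tilde{\nu}_N^{\mathcal{M}}(d\mathbf{k}^{(N)}),
\end{align*}
using that the $\textnormal{GL}(N,\mathbb{Z}_p) \times \textnormal{GL}(N,\mathbb{Z}_p)$-orbits in $\textnormal{Mat}(N,\mathbb{Q}_p)$ are exactly the level sets of $\mathsf{Sing}_N$, so the conditional law of $\Pi_N^\infty \mathbf{Z}$ given $\mathbf{k}^{(N)}(\mathbf{Z})$ is the corresponding orbital measure; the full $\textnormal{GL}(\infty,\mathbb{Z}_p) \times \textnormal{GL}(\infty,\mathbb{Z}_p)$-invariance of $\mathcal{M}$ then pins down the joint distribution of the remaining entries.

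Next I would show $\tilde{\nu}_N^{\mathcal{M}} \implies \nu^{\mathcal{M}}$ weakly on $\Delta$. Tightness comes from Proposition \ref{ErgodicProp1}: for $\mathcal{M}$-a.e.\ $\mathbf{Z}$ the sequence $k_1^{(N)}(\mathbf{Z})$ is non-decreasing in $N$ and converges to $k_1(\mathbf{Z}) \in \mathbb{Z} \cup \{-\infty\}$, so the laws of $k_1^{(N)}$ under $\mathcal{M}$ form a tight family on $\mathbb{Z} \cup \{-\infty\}$, and since $k_j^{(N)} \le k_1^{(N)}$ for every $j$ this tightness transfers to $\Delta$. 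Given any subsequential weak limit $\tilde{\nu}_\infty$, Proposition \ref{OrbitalMeasuresProp} supplies $\mathsf{Orb}_{\mathbf{k}^{(N)}} \implies \mu_{\mathbf{k}}$ whenever $\mathbf{k}^{(N)} \to \mathbf{k}$, and testing against bounded continuous cylindrical functions on $\textnormal{Mat}(\mathbb{N},\mathbb{Q}_p)$ allows passage to the limit in the displayed disintegration, yielding
\begin{align*}
\mathcal{M} = \int_\Delta \mu_{\mathbf{k}} \, \tilde{\nu}_\infty(d\mathbf{k}).
\end{align*}
Uniqueness of the ergodic decomposition (Proposition \ref{ErgodicDecompositionProp}) forces $\tilde{\nu}_\infty = \nu^{\mathcal{M}}$, so the full sequence converges.

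Finally, Proposition \ref{ErgodicProp1} gives $\mathbf{k}^{(N)}(\mathbf{Z}) \to \mathsf{Sing}(\mathbf{Z})$ for $\mathcal{M}$-a.e.\ $\mathbf{Z}$, so dominated convergence against any bounded continuous function on $\Delta$ yields $\tilde{\nu}_N^{\mathcal{M}} \implies (\mathsf{Sing})_*\mathcal{M}$. Comparing the two identifications of the limit gives $(\mathsf{Sing})_*\mathcal{M} = \nu^{\mathcal{M}}$. The hard part will be the limit-passage step above: transferring the pointwise weak convergence $\mathsf{Orb}_{\mathbf{k}^{(N)}} \implies \mu_{\mathbf{k}}$ into a genuine limit of integrals against the varying measures $\tilde{\nu}_N^{\mathcal{M}}$, which requires combining the tightness with a careful truncation argument on the Tychonoff-product space $\textnormal{Mat}(\mathbb{N},\mathbb{Q}_p)$ using cylindrical test functions.
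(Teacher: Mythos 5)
Your approach is genuinely different from the paper's and substantially longer. The paper exploits a consequence that was already established in the proof of Proposition~\ref{ErgodicProp1}: by Vershik's ergodic theorem combined with Proposition~\ref{OrbitalMeasuresProp}, each $\mu_{\mathbf{k}}$ is supported on $\mathsf{Sing}^{-1}(\mathbf{k})\subset\textnormal{Mat}_{\textnormal{reg}}\left(\mathbb{N},\mathbb{Q}_p\right)$. Hence for any bounded Borel $\mathsf{F}$ on $\Delta$ with pullback $\mathfrak{F}=\mathsf{F}\circ\mathsf{Sing}$ one has $\mu_{\mathbf{k}}(\mathfrak{F})=\mathsf{F}(\mathbf{k})$, and plugging this into the disintegration $\mathcal{M}=\int_{\Delta}\mu_{\mathbf{k}}\,\nu^{\mathcal{M}}(d\mathbf{k})$ of Proposition~\ref{ErgodicDecompositionProp} gives $\mathcal{M}(\mathfrak{F})=\nu^{\mathcal{M}}(\mathsf{F})$ in one line. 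You instead re-run the Vershik--Kerov approximation scheme from scratch, disintegrating the finite-dimensional marginals over $\textnormal{GL}(N,\mathbb{Z}_p)\times\textnormal{GL}(N,\mathbb{Z}_p)$-orbits and taking $N\to\infty$; this is a valid alternative, but it re-derives machinery rather than using what Proposition~\ref{ErgodicProp1} already hands you.

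Two remarks on your version. First, the displayed level-$N$ identity $\mathcal{M}=\int_{\Delta_N}\mathsf{Orb}_{\mathbf{k}^{(N)}}\,\tilde\nu_N^{\mathcal{M}}(d\mathbf{k}^{(N)})$ cannot hold literally on $\textnormal{Mat}\left(\mathbb{N},\mathbb{Q}_p\right)$, since $\mathsf{Orb}_{\mathbf{k}^{(N)}}$ is supported on finite matrices, and the claim that full $\textnormal{GL}(\infty,\mathbb{Z}_p)\times\textnormal{GL}(\infty,\mathbb{Z}_p)$-invariance ``pins down the joint distribution of the remaining entries'' is false (distinct invariant measures can share all marginals up to level $N$). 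The correct statement is about $(\Pi_N^\infty)_*\mathcal{M}$, and it suffices for your purposes once you test against cylindrical functions, as you do. Second, the subsequence/tightness machinery is unnecessary and the ``hard part'' you flag dissolves if you phrase the passage to the limit via the conditional law: for a bounded continuous cylindrical $\mathfrak{f}$, write $\mathcal{M}(\mathfrak{f})=\int\mathsf{Orb}_{\mathbf{k}^{(N)}(\mathbf{Z})}(\mathfrak{f})\,d\mathcal{M}(\mathbf{Z})$ for $N$ large, then use Proposition~\ref{ErgodicProp1} (a.s.\ convergence $\mathbf{k}^{(N)}(\mathbf{Z})\to\mathsf{Sing}(\mathbf{Z})$), Proposition~\ref{OrbitalMeasuresProp}, and dominated convergence to get $\mathcal{M}(\mathfrak{f})=\int_\Delta\mu_{\mathbf{k}}(\mathfrak{f})\,d\left[(\mathsf{Sing})_*\mathcal{M}\right](\mathbf{k})$; uniqueness in Proposition~\ref{ErgodicDecompositionProp} then finishes the argument without ever forming the varying measures $\tilde\nu_N^{\mathcal{M}}$.
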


\begin{proof}[Proof of Proposition \ref{ErgodicProp1}]
First, let $\mu_{\mathbf{k}}\in \mathcal{P}_{\textnormal{erg}}\left(\textnormal{Mat}\left(\mathbb{N},\mathbb{Q}_p\right)\right)$. By Vershik's ergodic theorem, see Theorem 3.2 in \cite{OlshanskiVershik}, $\mu_{\mathbf{k}}$ is concentrated on the set of $\mathbf{Z}\in \textnormal{Mat}\left(\mathbb{N},\mathbb{Q}_p\right)$ for which the orbital measures $\big\{\mathsf{Orb}_{\mathbf{k}^{(N)}\left(\mathbf{Z}\right)}\big\}_{N\ge 1}$ weakly converge to $\mu_{\mathbf{k}}$. By Proposition \ref{OrbitalMeasuresProp} this set consists of those matrices $\mathbf{Z}$, so that the limits (\ref{OrbitalLimits2}) exist and coincide with the parameters $\mathbf{k}$ of $\mu_{\mathbf{k}}$. All such $\mathbf{Z}$ by definition belong to $\textnormal{Mat}_{\textnormal{reg}}\left(\mathbb{N},\mathbb{Q}_p\right)$ and thus $\mu_{\mathbf{k}}$ is supported on $\textnormal{Mat}_{\textnormal{reg}}\left(\mathbb{N},\mathbb{Q}_p\right)$. Now, suppose $\mathcal{M} \in \mathcal{P}_{\textnormal{inv}}\left(\textnormal{Mat}\left(\mathbb{N},\mathbb{Q}_p\right)\right)$ with ergodic decomposition measure $\nu^{\mathcal{M}}$.
Let $\mathfrak{F}(\cdot)=\mathbf{1}_{\textnormal{Mat}_{\textnormal{reg}}\left(\mathbb{N},\mathbb{Q}_p\right)}\left(\cdot\right)$ and apply Proposition \ref{ErgodicDecompositionProp}. Since $\mu_{\mathbf{k}}\left(\mathfrak{F}\right)=1$ and $\nu^{\mathcal{M}}$ is a probability measure we get $\mathcal{M}\left(\mathfrak{F}\right)=1$. The proposition is fully proven.
\end{proof}

\begin{proof}[Proof of Proposition \ref{ErgodicProp2}]
Let $\mathsf{F}$ be an arbitrary bounded Borel function $\Delta$ and $\mathfrak{F}$ be its pullback on $\textnormal{Mat}_{\textnormal{reg}}\left(\mathbb{N},\mathbb{Q}_p\right)$. We then need to show:
\begin{align*}
\mathcal{M}\left(\mathfrak{F}\right)=\nu^{\mathcal{M}}\left(\mathsf{F}\right).
\end{align*}
Firstly, recall that from the proof of Proposition \ref{ErgodicProp1} $\mu_{\mathbf{k}}$ is supported on $\mathsf{Sing}^{-1}\left(\mathbf{k}\right)\subset \textnormal{Mat}_{\textnormal{reg}}\left(\mathbb{N},\mathbb{Q}_p\right)$ and by the very definition of $\mathfrak{F}$, $\mathfrak{F}\big|_{\mathsf{Sing}^{-1}\left(\mathbf{k}\right)}=\mathsf{F}(\mathbf{k})$ so that $\mu_{\mathbf{k}}\left(\mathfrak{F}\right)=\mathsf{F}(\mathbf{k})$.
Then, by definition we have:
\begin{align*}
\mathcal{M}\left(\mathfrak{F}\right)=\int_{\Delta}^{}\mu_{\mathbf{k}}\left(\mathfrak{F}\right)\nu^{\mathcal{M}}\left(d\mathbf{k}\right)=\int_{\Delta}^{}\mathsf{F}(\mathbf{k})\nu^{\mathcal{M}}\left(d\mathbf{k}\right)
\end{align*}
which is exactly what we wanted to prove.
\end{proof}

\bigskip
\noindent
{\sc School of Mathematics, University of Edinburgh, James Clerk Maxwell Building, Peter Guthrie Tait Rd, Edinburgh EH9 3FD, U.K.}\newline
\href{mailto:theo.assiotis@ed.ac.uk}{\small theo.assiotis@ed.ac.uk}

\end{document}